\definecolor{shadecolor}{RGB}{248,248,248}
\theoremstyle{plain}
\newtheorem{theorem}{Theorem}[section]
\newtheorem{definition}[theorem]{Definition}
\newtheorem{example}[theorem]{Example}
\newtheorem{remark}[theorem]{Remark}
\newtheorem{corollary}[theorem]{Corollary}
\newtheorem{proposition}[theorem]{Proposition}
\newtheorem{lemma}[theorem]{Lemma}
\newcommand{\N}{\mathbb{N}}
\newcommand{\R}{\mathbb{R}}
\numberwithin{equation}{section}
\newcommand{\mfk}{\mathfrak{f}}
\newcommand{\F}{\mathscr{F}}
\newcommand{\calZ}{\mathcal{Z}}
\newcommand{\scrZ}{\mathscr{Z}}
\newcommand{\Prob}{\mathbb{P}}
\newcommand{\E}{\mathbb{E}}
\providecommand{\abs}[1]{\lvert #1\rvert}
\providecommand{\Enorm}[1]{\lVert #1\rVert_2}
\DeclareMathOperator{\dom}{dom}
\renewcommand{\phi}{\varphi}
\renewcommand{\epsilon}{\varepsilon}
\renewcommand{\rho}{\varrho}
\renewcommand{\P}{\mathbb{P}}
\begin{document}

\title[The Poisson multiplication formula]{\bf The Poisson Multiplication Formula}

\author[1]{\fnm{Lorenzo} \sur{Cristofaro}}\email{lorenzo.cristofaro@uni.lu}

\author[1]{\fnm{Giovanni} \sur{Peccati}}\email{giovanni.peccati@uni.lu}

\affil[1]{\orgdiv{Department of Mathematics}, \orgname{University of
Luxembourg}; \orgaddress{\street{6, Avenue de la Fonte}, \city{Esch-sur-Alzette}, \postcode{4364}, \country{Luxembourg}}}

\abstract{ We establish necessary and sufficient conditions implying that the product of $m\geq 2$ Poisson functionals, living in a finite sum of Wiener chaoses, is square-integrable. Our conditions are expressed in terms of iterated add-one cost operators, and are obtained through the use of a novel family of {\em Poincar\'e inequalities} for almost surely finite random variables, generalizing the recent findings by Trauthwein (2024). When specialized to the case of multiple Wiener-It\^o integrals, our results yield general multiplication formulae on the Poisson space under minimal conditions, naturally expressed in terms of partitions and diagrams. Our work addresses several questions left open in a seminal work by Surgailis (1984), and completes a line of research initiated in D\"obler and Peccati (2018).
}

\keywords{Poisson functionals, multiple Wiener–Itô integrals, Poincar\'e inequalities, product formula, add-one cost operator, contractions}

\pacs[MSC Classification]{60H07, 60H05, 60E15, 60G55, 60G57}

\maketitle

 \section{Introduction}\label{sec:int}
\subsection{Overview}

The goal of this paper is to establish necessary and sufficient conditions for the {\bf square-integrability} of the product of an arbitrary number of {\bf multiple Wiener-It\^o integrals} with respect to a general {\bf random Poisson measure}, and to deduce an explicit analytical expression for the associated {\bf multiplication formulae}. As demonstrated below, our main results (stated succinctly in Theorem \ref{t:mainintro} and detailed in the subsequent sections) address several open problems raised in the classical work by Surgailis \cite{surgailis84}. They also complete --- by introducing new ideas and techniques of independent interest --- a line of research started by D\"obler and Peccati in \cite{DoblerPeccati18}; see also \cite{Kab76, DTGS23, RW97}. We stress that~\cite{surgailis84} is a cornerstone of modern stochastic analysis, demonstrating how multiplication formulae can be used to establish the {\bf non-hypercontractivity} of the Ornstein--Uhlenbeck semigroup on the Poisson space, a result that paved the way for the development of {\bf modified logarithmic Sobolev inequalities} on configuration spaces and related concentration estimates; see e.g.~\cite{GuSaTh, BachmannPeccati2016, wu2000, bobkovledoux, chafai8, AdamczakPivovarovSimanjuntak2024, GieLast, NPYpams}.

Because of the well-known {\bf Wiener-It\^o chaos expansion} of Poisson functionals (see formula \eqref{chaosdec}, as well as references \cite{Lastsv, LPfock, LPbook, PTbook, privaultbook}), the multiple stochastic integrals studied in this paper constitute the basic building blocks of generic random variables depending on a given Poisson measure. As demonstrated below, their use is frequently simplified by a fundamental formula due to Last and Penrose (see \cite{LPfock} and the forthcoming Theorem \ref{t:lastpenrose}), which connects the chaos expansion of a Poisson functional to the expected value of iterated {\bf add-one cost operators} --- see Section~\ref{ss:basicintro} and formula \eqref{e:addone}.






Since the appearance of the seminal reference \cite{SchulteThaele12}, and due to the geometric nature of the Last-Penrose formula mentioned above, the study of fluctuations of multiple integrals has gained increasing importance in the probabilistic analysis of geometric models based on random point configurations, often in connection with techniques based on Stein's method (see e.g. \cite{DoblerPeccatiAOP, PSTU, PRbook, DVZ18}). This is particularly evident in the context of {\bf $U$-statistics} \cite{ST24,PeccatiThaeleALEA,Thomas2023,LeMinh2023,PianoforteTurin2023,BourguinPeccati2014,DoblerPeccati2017} and more general geometric functionals \cite{Schulte12,KabluchkoRosenThale2024,BaciBonnetThale2022,ReitznerSchulteThale2017,FisslerThale2016,HugLastSchulte2016,Herry2020,HugThaleWeil2015,LPST,LRPEJP,LRPSPA,BourguinCampeseDang2024,CongXia2024,EichelsbacherThale2014,BachmannPeccati2016} (sometimes in non-standard settings \cite{DurMaPec,DecHomology,BetkenHugThale2023,Grygierek2020,AkinwandeReitzner2020,BachmannReitzner2018,BourguinDurastanti2017, STT24}) as well as in the framework of sensitivity analysis for percolation-type models \cite{BPY,LPY}. The kind of multiplication formulae considered in the present paper plays an important role, e.g., in the analysis of {\bf central} and {\bf non-central limit theorems} for elements of Wiener chaoses \cite{DoblerPeccati18, PSTU, PeccatiThaeleALEA, DVZ18, Schulte2016}, and in the derivation of {\bf second-order results for $U$-statistics} \cite{SchulteThaele12, LRRsv, LRPEJP, LRPSPA, ReitznerSchulteThale2017, ST24}.

We will see in Section \ref{sec:PoincInequaIntegrAssump} that one of our main technical tools is an extension of the class of {\bf $p$-Poincar{\'e} inequalities} established by Trauthwein in \cite{trauthwein}. See also \cite{LPS, multitara}.

\smallskip
For the rest of the paper, every random element is defined on a common probability space $(\Omega, \mathscr{F}, \P)$, with $\E$ denoting expectation with respect to $\P$. Given a real-valued mapping {\color{blue} $H(z_1,...,z_K)$} in $K$ variables, we define the {\bf symmetrization} of {$H$} to be the function
\begin{equation}\label{e:sym}
{(z_1,...,z_K)\mapsto {\rm sym}( H) (z_1,...,z_K)  := \frac{1}{K!} \sum_p H(z_{p(1)},..., z_{p(K)}),}
\end{equation}
where the sum runs over all permutations $p$ of the set $[K] := \{1,...,K\}$.
\subsection{The problem}\label{ss:basicintro}

We will now introduce a minimal amount of notation, which will allow us to state and discuss our main contributions. The reader is referred to Section \ref{sec:Prelimi} for an exhaustive presentation, complete with technical details and pointers to the literature.

\smallskip 

We denote by $\eta$ a Poisson measure on a measurable space $(\mathcal{Z},\mathscr{Z})$, with $\sigma$-finite intensity $\mu$. Given a random variable $F = F(\eta)$ and $z\in \mathcal Z$, we write $D_z^+ F = D_z^+ F(\eta)  := F(\eta+\delta_z) - F(\eta)$ to indicate the add-one cost operator of $F$ at the point $z$, {where $\delta_z$ indicates the Dirac mass at $z$}. For $k\geq 1$ and $z_1,...,z_k\in \mathcal Z$, we also define $D^{(k)}_{z_1,...,z_k} F := D^+_{z_1}\cdots D^+_{z_k} F$ (so that $D^+ = D^{(1)}$); finally, we set $D^{(0)} := {\rm Id.}$. For $k=0,1,2,...$ and for a symmetric $f\in L^2(\mu^k)$, we write $I_k(f)$ to denote the multiple Wiener-It\^o integral of order $k$ of $f$ with respect to $\eta$, with the obvious identification $L^2(\mu^0)= \R$ and $I_0(c) = c$. We recall (Wiener-It\^o chaos expansion) that every square-integrable $F = F(\eta)$ can be uniquely written as an infinite series of the type $F = \sum_{k=0}^\infty I_k(f_k)$, with $f_k = k!^{-1}\E[ D^{(k)} F]$ (Last-Penrose formula). For $k\geq 0$, the collection of all Poisson multiple integrals of order $k$ is referred to as the $k$th Wiener chaos associated with $\eta$.

\smallskip 

As anticipated, the goal of the present work is to address the following problem.

\smallskip 

\noindent{\bf Problem A.} {\em Let $m\geq 2$, consider integers $k_1,...,k_m\geq 1$, and, for $i=1,...,m$, let $f_i$ be a symmetric element of $L^2(\mu^{k_i})$, define $F_i := I_{k_i}(f_i)$, and set
$$
\Phi:= \prod_{i=1}^m F_i.
$$
Then:
\begin{itemize}
\item[\rm (i)] Find necessary and sufficient conditions ensuring that 
\begin{equation}\label{e:phiintro}
\Phi \in L^2(\mathbb{P});
\end{equation}
\item[\rm (ii)] When $\Phi\in L^2(\mathbb{P})$, write explicitly the Wiener-It\^o chaos decomposition of $\Phi$ as a sum of multiple integrals.
\end{itemize}
}

\smallskip 

\begin{remark}{\rm

\begin{enumerate}
    \item If $\eta$ is replaced by a Gaussian measure $G$ with intensity $\mu$ (see \cite[Example 2.1.4]{NouPecbook}), the random variables $F_i$ become Gaussian multiple Wiener-It\^o integrals (see \cite[Section 2.7.1]{NouPecbook}), and the corresponding version of {\bf Problem~A} admits a classical solution. Indeed, in this case the {\bf hypercontractivity} of Gaussian Wiener chaoses (see \cite[Theorem 2.7.2]{NouPecbook}), implies that $\Phi \in L^p(\P)$ for all $p\geq 1$, and the resulting chaos expansion can be made explicit by iterating product formulae based on the use of {\bf contractions}, such as the ones presented in \cite[Theorem 2.7.10]{NouPecbook} or \cite[Section 6.4]{PTbook}. We stress that --- according e.g. to \cite{NPYpams, surgailis84} --- Poisson Wiener chaoses are {\it not} hypercontractive, Poisson multiple integrals may not belong to any space $L^p(\P)$, $p>2$, and the solution to {\bf Problem~A} is consequently non trivial.
\item In the case where the kernels $f_i$ are finite linear combinations of indicators of (hyper)rectangles with finite measure, one has that $\Phi$ admits moments of all orders, and the corresponding chaos expansion can be written explicitly by using the formalism of partitions and diagonal sets --- see e.g. \cite[Sections 6.1 and 6.5]{PTbook}. One of the principal contributions of the present work (see e.g. Theorem \ref{t:mainintro}) is an extension of such a result to generic collections of integrands $f_1,...,f_m$.
\end{enumerate}

}
\end{remark}

\subsection{Existing results and main contributions}\label{ss:existintro}

One partial solution to {\bf Problem A} in the case of an arbitrary integer $m\geq 2$ (containing the findings of Kabanov \cite{Kab76} as a special case) appears in Surgailis' seminal work \cite{surgailis84}. In order to state Surgailis' result, we introduce a portion of the partition-based formalism that will be fully developed in Section \ref{ss:partitions}. Given $m\geq 1$ and a vector $(k_1,\dots,k_m)\in \mathbb{Z}_+^m:= \{1,2,\dots\}^m$, we write $K := k_1+\cdots +k_m$, and denote by $\pi^\star$ the partition of $[K]$ given by
\begin{equation}\label{e:blocks}
\pi^\star := \Big\{ \{1,\dots,k_1\}, \{k_1+1,\dots, k_1+k_2\}, \dots, \{k_1+\cdots +k_{m-1}+1,\dots, K\}\Big\}
\end{equation}
(in other words, $\pi^\star$ is obtained by considering consecutive blocks of integers, with sizes $k_1, k_2, \dots, k_m$, respectively). Adopting the notation introduced in \cite[Section 12.2]{LPbook}, we write $\Pi(k_1,\dots,k_m)$ to indicate the class of all partitions $\sigma$ of $[K]$ such that each block $b\in \sigma$ is such that, $| b\cap b^\star |\in \{0,1\}$ for every block $b^\star \in \pi^\star$, that is, every block of $\sigma$ has at most one element in common with each block of $\pi^\star$. We observe that, in the parlance of \cite[p. 48]{PTbook}, the partition $\sigma$ can be identified with a {\bf non-flat diagram}. Given $\sigma\in\Pi(k_1,\dots,k_m) $, one writes $\sigma_{1}$ and $\sigma_{\geq 2}$, respectively, to indicate the collection of all singletons of $\sigma$, and the collection of all blocks of $\sigma$ of size $\geq 2$ (that is, $\sigma_{\geq 2} = \sigma\, \backslash \,  \sigma_1$); observe that $\sigma_1$ and $\sigma_{\geq 2}$ can be empty (but not simultaneously!). Now fix a pair $(\sigma,A)$, where $\sigma\in \Pi(k_1,...,k_m)$ and $A\subseteq \sigma_{\geq 2}$; given symmetric functions $f_1,...,f_m$ as in {\bf Problem A}, we define a new function {$H(\sigma, A; f_1,...,f_m)$} in $|A| + |\sigma_1|$ variables as follows (assuming all integrals are well-defined):
\begin{itemize}
\item[(1)] Consider the function in $K$ variables given by
\begin{equation*}
{ f_1\otimes \cdots \otimes f_m (v_1,\dots,v_K) :=  \prod_{i=1}^m f_i(v_{k_1+\cdots+ k_{i-1}+1},\dots, v_{k_1+\cdots +k_i}), \, (v_1,\dots,v_K) \in \mathcal{Z}^K};
\end{equation*}

\item[(2)] Identify two variables $v_i, v_j$ in the argument of $f_1\otimes \cdots \otimes f_m$ if and only if $i$ and $j$ are in the same block of $ \sigma_{\geq 2}$;
\item[(3)] For each block $b \in \sigma_{\geq 2} \setminus A$, integrate with respect to $\mu$
the variable resulting from the identification of those $v_j$ such that $j\in b$;
\item[(4)] Symmetrize the resulting expression according to~\eqref{e:sym}, and express it as a function of the variables identified by the blocks in $A \cup \sigma_1$, labeled as $z_1, \dots, z_{|A| + |\sigma_1|}$.

\end{itemize}
If $\sigma = \hat{0}$, where $\hat{0}$ stands for the {\bf minimal} partition whose blocks are the singletons of $[K]$ (see e.g. {\rm \cite[Definition 2.2.3]{PTbook}}), then $\sigma = \sigma_1$ and $\sigma_{\geq 2}=  \emptyset$, and consequently $$H(\sigma, \emptyset ; f_1,\cdots,f_m)=H(\hat{0}, \emptyset ; f_1,\cdots,f_m) ={\rm sym} (f_1\otimes \cdots \otimes f_m).$$ {See Examples \ref{ex:2} and \ref{exam:ProductmFunctions} for further illustrations of the previous construction.}
\begin{figure}[!t]
\centering
\begin{tikzpicture}[scale=1]

\def\dx{0.6}
\def\offset{5.2} 

\foreach \i in {1,...,6} {
  \node[circle, fill=black, inner sep=1pt, label=below:\i] at ({\i*\dx},0) {};
}
\draw[thick] ({2.5*\dx}, -0.05) -- ({2.5*\dx}, 0.1);
\draw[thick] ({4.5*\dx}, -0.05) -- ({4.5*\dx}, 0.1);
\draw[red, thick] ({2*\dx},0) .. controls ({2.5*\dx},0.5) .. ({3*\dx},0);
\draw[red, thick] ({4*\dx},0) .. controls ({4.5*\dx},0.3) .. ({5*\dx},0);
\draw[red, thick] ({3*\dx},0) .. controls ({4.5*\dx},0.6) .. ({6*\dx},0);

\foreach \i in {1,...,6} {
  \node[circle, fill=black, inner sep=1pt, label=below:\i] at ({\i*\dx + \offset},0) {};
}
\draw[thick] ({2.5*\dx + \offset}, -0.05) -- ({2.5*\dx + \offset}, 0.1);
\draw[thick] ({4.5*\dx + \offset}, -0.05) -- ({4.5*\dx + \offset}, 0.1);
\draw[red, thick] ({2*\dx + \offset},0) .. controls ({2.5*\dx + \offset},0.5) .. ({3*\dx + \offset},0);
\draw[blue, thick] ({4*\dx + \offset},0) .. controls ({4.5*\dx + \offset},0.3) .. ({5*\dx + \offset},0);
\draw[red, thick] ({3*\dx+\offset},0) .. controls ({4.5*\dx+\offset},0.6) .. ({6*\dx+\offset},0);

\node at ({3.5*\dx}, -0.9) {(a) $A_1 = \sigma_{\geq 2} =\{\{2,3,6\}, \{4,5\}\} $};
\node at ({3.5*\dx + \offset}, -0.9) {(b) $A_2 =\{\{2,3,6\}\} $};

\end{tikzpicture}
\caption{\it Visualization of two pairs $(\sigma,A_i)$, $i=1,2$, where $m=3$, $k_1=k_2=k_3=2$, $\sigma =\{\{1\}, \{2,3,6\}, \{4,5\}\}$ and $A_i\subseteq \sigma_{\geq 2}$ corresponds to the red blocks.}
\label{f:2diag}
\end{figure}
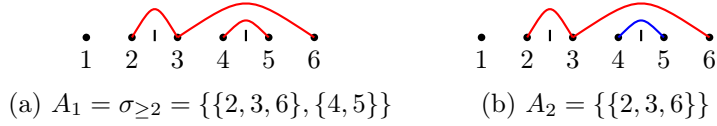

\smallskip 

\begin{remark}\label{r:indy}{\rm Computing the quantity
\begin{equation}\label{e:hgrande}
H(\sigma, A; f_1,\dots,f_m)(z_1,\dots,z_{|A|+|\sigma_1|})
\end{equation}
introduced above, requires partitioning the argument of each mapping \( f_i \), for \( i = 1,\dots,m \), into two subsets of variables \( (\mathbf{z}_i, \mathbf{v}_i) \), such that \( \mathbf{z}_i \subseteq \{z_1,\dots,z_{|A|+|\sigma_1|} \} \), and \( \mathbf{v}_i \) is integrated with respect to an appropriate power of \( \mu \). In what follows, we will refer to the mappings
\[
\mathbf{v}_i \mapsto f_i(\mathbf{z}_i, \mathbf{v}_i), \quad i=1,...,m,
\]
as the {\bf individual kernels} integrated in the definition of \eqref{e:hgrande}. 

}
\end{remark}

\bigskip

\begin{example}\label{ex:2}
{\rm In the setting of {\bf Figure \ref{f:2diag}}, one has that
\begin{eqnarray*}
&& {H(\sigma, A_1; f_1,f_2,f_3)(z_1, z_2, z_3)}   \\
&&=\frac16 \sum_{p} f_1(z_{p(1)}, z_{p(2)})f_2(z_{p(2)}, z_{p(3)})f_3(z_{p(3)}, z_{p(2)}), \quad\mbox{and} \\
&& {H(\sigma, A_2; f_1,f_2,f_3)(z_1, z_2)} \\
&& {= \int_\mathcal{Z} (f_1(z_1,z_2) f_2(z_2,v) f_3(v,z_2) +f_1(z_2,z_1) f_2(z_1,v) f_3(v,z_1))  \mu(dv),}
\end{eqnarray*}
where the first sum runs over all permutations $p$ of the set $\{1,2,3\}$. In this case, there is no kernel integrated in the definition of $H(\sigma, A_1; f_1,f_2,f_3)(z_1, z_2, z_3)$, whereas the individual kernels integrated in the definition of $H(\sigma, A_2; f_1,f_2,f_3)(z_1, z_2)$ correspond to the four mappings
$$
v\mapsto f_2(z_2,v), \, f_3(v,z_2), \, f_2(z_1,v), \, f_3(v,z_1).
$$


}
\end{example}
\medskip

The following result is one of the main findings in \cite{surgailis84}.

\medskip

\begin{theorem}
[\bf Proposition 3.1 in \cite{surgailis84}] \label{pro:SurgaiProp3.1} Let the above notation prevail, and consider the setting of {\bf Problem A}. Assume that, for all $\sigma \in {\Pi}(k_1,\dots,k_m)$ and for all $A \subseteq \sigma_{\geq 2}$,
    \begin{equation} \label{eq:SurgaAssump}
      {H(\sigma, A; |f_1|, \dots, |f_m|) \in L^2(\mu^{|A| + |\sigma_1|}),}
    \end{equation} 
    Then, 
    \begin{equation} \label{eq:SquareIntegrabProductMSI}
        \Phi:=\prod_{i=1}^m I_{k_i}(f_i) \in L^2(\Prob)
    \end{equation}
    and 
    \begin{equation}\label{eq:SurgaiProducDecompo} \Phi \in \bigoplus_{q=0}^{K} C_q \end{equation}
    where $C_q$ is the $q$th Wiener chaos of $\eta$. Moreover, the Wiener-It\^o chaos expansion of $\Phi$, written
    \begin{equation}\label{e:introchaosphi}
    \Phi = \mathbb{E}(\Phi) + \sum_{q=1}^K I_q(h_q),
    \end{equation}
    is such that, for all $q=0,1,...,K$,
    \begin{equation}\label{e:hq}
    h_q =  \sum_{\substack{\sigma \in {\Pi}(k_1,\dots,k_m)\\ A\subseteq \sigma_{\geq 2} \\ |A| +|\sigma_1|=q }} H(\sigma, A; f_1, \dots, f_m) ,
    \end{equation}
    where we have adopted the notation \eqref{e:sym} and $h_0= \E[\Phi]$; in particular, $h_K = {\rm sym}(f_1\otimes\cdots\otimes f_m)$.

\end{theorem}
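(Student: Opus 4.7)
My plan is to establish the chaos decomposition directly via the Last--Penrose formula $h_q = \frac{1}{q!}\,\E[D^{(q)}\Phi]$ and to verify $\Phi \in L^2(\P)$ by approximation with elementary kernels. The starting point is the iterated Leibniz rule: since the shift $\tau_z := D^+_z + \mathrm{Id}$ (mapping $F(\eta)\mapsto F(\eta+\delta_z)$) is a ring homomorphism, one obtains
\begin{equation*}
D^{(q)}_{z_1,\ldots,z_q}(F_1\cdots F_m) \;=\; \sum_{\substack{(S_1,\ldots,S_m)\in (2^{[q]})^m \\ S_1\cup\cdots\cup S_m = [q]}} \prod_{i=1}^m D^{(|S_i|)}_{(z_j)_{j\in S_i}} F_i.
\end{equation*}
Substituting $F_i = I_{k_i}(f_i)$ and applying the standard identity $D^{(j)}_{z_1,\ldots,z_j} I_k(f) = \frac{k!}{(k-j)!} I_{k-j}(f(z_1,\ldots,z_j,\cdot))$ (vanishing for $j>k$) reduces $D^{(q)}\Phi$ to a finite sum of products of lower-order multiple integrals of the ``sliced'' kernels $f_i((z_j)_{j\in S_i},\cdot)$.

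Taking expectations then invokes the classical moment formula for a product of multiple Wiener--It\^o integrals against a Poisson measure, expressible as a sum over non-flat diagrams on the residual slots (cf.\ \cite[Sections 6.1 and 6.5]{PTbook}); each such diagram specifies which ``free'' arguments of the $f_i$'s (those not consumed by Leibniz) are identified and integrated together against $\mu$. Combining the Leibniz index $(S_1,\ldots,S_m)$ with the internal expectation diagram yields a double sum which reorganizes as a single sum over pairs $(\sigma, A)$ with $\sigma \in \Pi(k_1,\ldots,k_m)$ and $A\subseteq\sigma_{\geq 2}$. In this correspondence, singletons of $\sigma$ represent slots filled directly by one of the free variables $z_j$ via Leibniz; blocks in $A$ represent slots identified across different $f_i$'s by Leibniz but still parametrized by a free $z_j$; and blocks in $\sigma_{\geq 2}\setminus A$ correspond to slots identified and integrated out by the expectation. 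The constraint $|A|+|\sigma_1|=q$ encodes that $h_q$ has precisely $q$ free arguments, and the truncation~\eqref{eq:SurgaiProducDecompo} is automatic since $|\sigma|\le K$ for every $\sigma\in\Pi(k_1,\ldots,k_m)$.

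For the integrability conclusion and to justify the exchanges above, I would proceed by approximation. First, verify the formula directly when each $f_i$ is a finite linear combination of indicators of pairwise disjoint bounded measurable rectangles: in this setting $\Phi$ has moments of all orders and~\eqref{e:hq} follows from the independence of $\eta$ on disjoint sets together with standard diagram combinatorics (cf.\ \cite[Chapter 6]{PTbook}). In the general case, the pointwise domination $|H(\sigma,A;f_1,\ldots,f_m)| \le H(\sigma,A;|f_1|,\ldots,|f_m|)$ together with~\eqref{eq:SurgaAssump} gives $h_q \in L^2(\mu^q)$ for every $q$, hence
\begin{equation*}
\sum_{q=0}^K q!\,\|h_q\|_{L^2(\mu^q)}^2 < \infty,
\end{equation*}
so that $F:= \sum_{q=0}^K I_q(h_q) \in L^2(\P)$ by the Wiener--It\^o isometry. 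Approximating the $f_i$'s by elementary kernels $f_i^{(n)}$ in a norm controlled by~\eqref{eq:SurgaAssump}, and using the formula already validated for $\Phi^{(n)}=\prod_i I_{k_i}(f_i^{(n)})$, identifies $\Phi$ with $F$ in $L^2(\P)$, yielding simultaneously $\Phi \in L^2(\P)$ and~\eqref{e:hq}.

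The main obstacle, in my view, is the combinatorial bookkeeping in the second paragraph: one must carefully align the Leibniz-generated ``overlap subsets'' $(S_1,\ldots,S_m)$ with the moment-formula diagrams so as to produce \emph{exactly one} term per pair $(\sigma, A)$, with the multiplicities $k_i!/(k_i-|S_i|)!$ exactly balancing the Last--Penrose prefactor $1/q!$ and the symmetrization factor implicit in~\eqref{e:sym}, and with the correct identification of $A$ (blocks surviving as free arguments of $h_q$) versus $\sigma_{\geq 2}\setminus A$ (blocks integrated against $\mu$). Getting this identification clean is what makes~\eqref{e:hq} combinatorially compact rather than an expression with overlapping multiplicities.
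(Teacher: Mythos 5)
Your route is genuinely different from the one taken in this paper: here the statement is quoted from \cite{surgailis84} and, within our framework, it is recovered as a special case of Theorem \ref{t:mainintro}, Part II --- one checks that assumption \eqref{eq:SurgaAssump} implies {\bf Condition A-(loc)} together with $h_q\in L^2(\mu^q)$ (via the pointwise bound $|H(\sigma,A;f_1,\dots,f_m)|\leq H(\sigma,A;|f_1|,\dots,|f_m|)$), and then the $p$-Poincar\'e machinery (Proposition \ref{p:doblerpeccati}, Theorem \ref{t:geo}) plus the combinatorial identification of Proposition \ref{prop: OneCostCondiA} give \eqref{eq:SquareIntegrabProductMSI}--\eqref{e:hq} without any approximation of the kernels. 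What you propose is instead the classical Surgailis-style strategy: verify the formula for elementary kernels (where all moments exist) and pass to the limit. Your Leibniz expansion of $D^{(q)}\Phi$ is correct --- it is exactly \eqref{e:dpprod} in the word notation, with $S_i=\{\ell : i\in A_\ell\}$ --- and the diagram bookkeeping you flag as the ``main obstacle'' is indeed the content of Proposition \ref{prop: OneCostCondiA}; note, however, that taking expectations term by term via the moment formula requires the local integrability hypotheses of Theorem \ref{t:diagram} (this is precisely why \eqref{e:ziocan}--\eqref{e:weakassintro} appear in Part II), so in your architecture the first two paragraphs are heuristic scaffolding and the whole burden falls on the approximation step.

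That step is where your proposal has a genuine gap. You write that one should approximate ``in a norm controlled by \eqref{eq:SurgaAssump}'', but there is no such norm: the map $(f_1,\dots,f_m)\mapsto H(\sigma,A;f_1,\dots,f_m)$ is \emph{not} continuous from $\prod_i L^2(\mu^{k_i})$ to $L^2(\mu^{|A|+|\sigma_1|})$ --- the failure of such continuity is the very reason an assumption like \eqref{eq:SurgaAssump} is needed at all. So $f_i^{(n)}\to f_i$ in $L^2(\mu^{k_i})$ does not by itself give $h_q^{(n)}\to h_q$ in $L^2(\mu^q)$, and without that convergence you cannot identify the in-probability limit $\Phi$ of $\Phi^{(n)}$ with the $L^2$-limit $F=\sum_q I_q(h_q)$. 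To close the argument you must choose the approximants with a \emph{pointwise} domination, e.g.\ simple symmetric kernels with $|f_i^{(n)}|\leq |f_i|$ and $f_i^{(n)}\to f_i$ $\mu^{k_i}$-a.e.\ (and in $L^2$), then exploit multilinearity of $H(\sigma,A;\cdot)$ to telescope the differences and apply dominated convergence twice --- once inside the $\mu$-integrals over the blocks of $\sigma_{\geq 2}\setminus A$, and once in $L^2(\mu^q)$, with dominating function a constant multiple of $H(\sigma,A;|f_1|,\dots,|f_m|)$, which is where \eqref{eq:SurgaAssump} (with absolute values, not the weaker \eqref{e:ersatz}) is used in an essential way. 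You should also specify the elementary class more carefully: ``bounded rectangles'' has no meaning on an abstract $\sigma$-finite space $(\mathcal{Z},\mathscr{Z},\mu)$; what is needed is simple kernels supported on products of sets in $\mathscr{Z}_\mu$, so that $\Phi^{(n)}$ indeed has moments of all orders and the diagram computation of \cite[Sections 6.1 and 6.5]{PTbook} applies. With these ingredients made explicit your plan becomes a complete proof, but as written the decisive analytic mechanism is missing.
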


\medskip

\begin{remark}\label{r:problems}{\rm 
\begin{enumerate}

\item Requirement \eqref{eq:SurgaAssump}
    implies that the quantity {$H(\sigma, A; f_1, \dots, f_m)(z_1,..., z_{|A| +|\sigma_1|})$} is well-defined and finite a.e.-$\mu^{|A|+|\sigma_1|}$. Note that the kernel $h_q$ in \eqref{e:hq} could be equivalently expressed by using the language of {\bf contractions} developed e.g. in \cite{DTGS23}.

\item It is a well-known fact, and a direct consequence of Theorem \ref{t:diagram} below (where the forthcoming relation \eqref{e:dumbo} is expressed in a slightly different notation) that, if $f_i\in L^1(\mu^i)$, $i=1,...,m$, and {$H(\sigma, \emptyset ; f_1, \dots, f_m)$} is well-defined and finite for all $\sigma$ such that $\sigma_1 = \emptyset$ (note that {$ H(\sigma, \emptyset ; f_1, \dots, f_m)$} is, by definition, a constant), then 
\begin{equation}\label{e:dumbo}
    \E[\Phi] =  \sum_{\substack{\sigma \in {\Pi}(k_1,\dots,k_m) \\ \mbox{\small s.t.} \,\,\sigma_1=\emptyset} }{H(\sigma, \emptyset; f_1, \dots, f_m)}.
    \end{equation}
Observe that Surgailis' result allows one to deduce the same conclusion without assuming that the kernels are of class $L^1$; see also \cite[Theorem 5.6]{ST24}, as well as the forthcoming discussion.

\item In relation with the content of Theorem \ref{pro:SurgaiProp3.1}, the following questions are raised in \cite[p. 222 and p. 228]{surgailis84} (see also \cite[Conjecture 4.9]{DTGS23}):
\begin{itemize}
\item[\bf (s1)] {\it Can one replace assumption \eqref{eq:SurgaAssump} with the weaker requirement that, for all $\sigma \in {\Pi}(k_1,\dots,k_m)$ and for all $A \subseteq \sigma_{\geq 2}$,
\begin{equation} \label{e:ersatz}
      {H(\sigma, A; f_1, \dots, f_m) \in L^2(\mu^{|A| + |\sigma_1|}) }\, ?
    \end{equation} 
}
\item[\bf (s2)] {\it Can one identify necessary and sufficient conditions for the property $\Phi \in L^2(\mathbb{P})$?}

\item[\bf (s3)] {\it Assume that $\Phi \in L^2(\mathbb{P})$. Is it true that relation \eqref{eq:SquareIntegrabProductMSI} holds, and that $h_K = {\rm sym}(f_1\otimes\cdots\otimes f_m)$?}

    \end{itemize}
\end{enumerate}
    }
\end{remark}

\medskip 

As discussed below, in the case $m=2$ the three questions {\bf (s1)---(s3)} have been fully addressed (and answered) in \cite[Theorem~2.2]{DoblerPeccati18} (see also \cite[Lemma~2.2]{DoblerPeccatiAOP} and \cite[Chapter~6]{privaultbook}). The aim of the present paper is to extend the results of \cite{DoblerPeccati18} to a generic $m\geq 2$. Our main findings are collected in the next statement, providing answers to {\bf (s1)---(s3)}. To the best of our knowledge, the forthcoming Theorem~\ref{t:mainintro} is the first result that closes a substantial part of the set of open questions raised in \cite{surgailis84}, in the case $m\geq 3$.

 \medskip 

\begin{theorem}\label{t:mainintro} Let the above notation prevail, and assume the setting of {\bf Problem A}. 

\smallskip

\noindent\underline{\bf Part I.} The following two properties are equivalent:

\begin{itemize}

\item[\bf (i)] $\Phi \in L^2(\mathbb{P})$;

\item[\bf (ii)] For all $q=1,\dotsc,K-1$ ($K= k_1+\cdots +k_m$) and $\mu^q$-a.e. $z_1,\dotsc,z_{q}\in\mathcal{Z}$, $D^{(q)}_{z_1,\dotsc,z_{q}}\Phi\in L^1(\P)$ and 
$(z_1,\dotsc,z_q)\mapsto\E\bigl[D^{(q)}_{z_1,\dotsc,z_{q}}\Phi\bigr]\in L^2(\mu^q)$.

\end{itemize}
Moreover, if either Condition {\bf (i)} or {\bf (ii)} is verified, then necessarily \eqref{eq:SurgaiProducDecompo} is true and the chaos decomposition \eqref{e:introchaosphi} of $\Phi$ is such that $h_K = {\rm sym}(f_1\otimes\cdots\otimes f_m)$. 

\smallskip 

\noindent\underline{\bf Part II.} Assume moreover that, for all $\sigma \in {\Pi}(k_1,\dots,k_m)$ and for all $A \subseteq \sigma_{\geq 2}$ verifying $|A|+|\sigma_1|:= q \geq 1$, one has that, 
\begin{equation}\label{e:ziocan}
\begin{array}{c}
\text{ for $\mu^q$-almost every $(z_1,...,z_q)\in \mathcal{Z}^q$, the individual kernels integrated} \\
\text{in the definition of } H(\sigma, A; f_1, \dots, f_m)(z_1,...,z_q) \text{ are of class } L^1
\end{array}
\end{equation}
(see Remark \ref{r:indy}), and that
    \begin{equation} \label{e:weakassintro}
         H(\sigma, A; f_1, \dots, f_m)(z_1,...,z_{|A|+|\sigma_1|})\,\, \mbox{is well-defined and finite a.e.-$d\mu^{|A|+|\sigma_1|}$}.
    \end{equation} 
    Then, Conditions {\bf (i)---(ii)} of {\bf Part I} are equivalent to the following requirement:
    \begin{itemize}
\item[\bf (iii)] For every $q=1,...,K-1$, the kernel $h_q$ defined in \eqref{e:hq} is an element of $L^2(\mu^q)$.
\end{itemize}
In this case, for $q=1,...,K-1$, one has that $h_q$ defined in \eqref{e:hq} is the $q$th kernel in the chaos expansion \eqref{e:introchaosphi} of $\Phi$.     
    
\end{theorem}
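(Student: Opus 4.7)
The plan is to treat Part I as the central claim, reducing Part II to a corollary via an explicit diagram-formula identification of the expected iterated add-one cost operators.

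For Part I, the direction (i) $\Rightarrow$ (ii) is straightforward. Assuming $\Phi \in L^2(\P)$, the Last-Penrose formula immediately gives $\Phi = \sum_{q \geq 0} I_q(h_q)$ with $h_q = \frac{1}{q!}\,\E[D^{(q)}\Phi] \in L^2(\mu^q)$, which is the $L^2$ part of (ii); the required $\mu^q$-a.e.\ $L^1(\P)$-integrability of $D^{(q)}_{z_1,\dots,z_q}\Phi$ follows from the inclusion-exclusion identity $D^{(q)}_{z_1,\dots,z_q}\Phi = \sum_{S\subseteq [q]}(-1)^{q-|S|}\Phi(\eta + \sum_{j\in S}\delta_{z_j})$ combined with standard Palm/Mecke calculus for $L^2$ Poisson functionals.

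For the non-trivial implication (ii) $\Rightarrow$ (i), the key engine will be the novel family of Poincar\'e-type inequalities for a.s.\ finite functionals developed in Section~\ref{sec:PoincInequaIntegrAssump} (extending Trauthwein's 2024 bounds): the idea is that for an a.s.\ finite random variable whose expected iterated add-one costs satisfy suitable $L^2$ bounds (and vanish beyond a certain order), one recovers membership in $L^2(\P)$ together with the corresponding chaos expansion. To apply such an inequality to $\Phi$, I would verify four inputs. First, $\Phi$ is a.s.\ finite as a finite product of a.s.\ finite multiple Wiener-It\^o integrals. Second, the required $L^2(\mu^q)$-control of $\E[D^{(q)}\Phi]$ holds for $q=1,\dots,K-1$ by hypothesis~(ii). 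Third, the top order $q = K$ can be computed explicitly: expanding $D^{(K)}$ by iterated Leibniz and using $D^{(\ell)} I_{k_i}(f_i) = \frac{k_i!}{(k_i-\ell)!} I_{k_i-\ell}(f_i(\cdot))$ for $\ell \leq k_i$ (and $0$ otherwise), only those assignments of the $K$ labels to the $m$ factors in which every $z_j$ is sent to exactly one factor and each $F_i$ absorbs precisely $k_i$ labels can survive; each such summand is deterministic, equal to $\prod_i k_i!\,f_i$, and collecting them gives $\E[D^{(K)}\Phi]=D^{(K)}\Phi = K!\,{\rm sym}(f_1 \otimes \cdots \otimes f_m)$, whence $h_K = {\rm sym}(f_1 \otimes \cdots \otimes f_m) \in L^2(\mu^K)$ automatically from $f_i\in L^2(\mu^{k_i})$. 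Fourth, $D^{(q)}\Phi \equiv 0$ for $q > K$, because every Leibniz summand must then overload some factor ($\ell > k_i$). Fed these four inputs, the generalized Poincar\'e inequality will simultaneously produce $\Phi \in L^2(\P)$, the inclusion~\eqref{eq:SurgaiProducDecompo}, and the identification of $h_K$.

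Part II will then be a corollary. Under the added hypotheses \eqref{e:ziocan} and \eqref{e:weakassintro}, I would establish a diagram-type identity for each $1 \leq q \leq K-1$: starting again from the Leibniz expansion of $D^{(q)}\Phi$, I would pass the expectation through each $I_{k_i-|T_i|}$-factor via Mecke's formula, and then apply Fubini to reorganize the resulting multiple integral as the partition sum indexed by $(\sigma, A) \in \Pi(k_1,\dots,k_m) \times 2^{\sigma_{\geq 2}}$ that defines $h_q$ in~\eqref{e:hq}; condition~\eqref{e:ziocan} is precisely the minimal $L^1$-hypothesis that licenses this exchange on each individual kernel, while~\eqref{e:weakassintro} ensures each resulting summand is well-defined. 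Once the identification $\frac{1}{q!}\E[D^{(q)}\Phi] = h_q$ is in place, condition (iii) becomes a literal restatement of (ii), and the final assertion that \eqref{e:hq} yields the actual chaos kernels of $\Phi$ follows from Part~I via Last-Penrose.

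The principal obstacle is unquestionably (ii) $\Rightarrow$ (i): upgrading mere a.s.\ finiteness of $\Phi$ to genuine $L^2$-integrability, without any \emph{a priori} $L^p$-control beyond the assumed $L^2$-behavior of the expected add-one costs, is precisely what forces the extension of Trauthwein's Poincar\'e inequality to the higher-order, almost-surely-finite setting needed here. A secondary but genuinely delicate point, arising in Part II, is the rigorous interchange of expectation, symmetrization, and integration in the diagram formula under the \emph{weak} integrability~\eqref{e:weakassintro}---in contrast to Surgailis's stronger absolute integrability~\eqref{eq:SurgaAssump}---for which~\eqref{e:ziocan} has been crafted as exactly the minimal ingredient permitting the required Fubini interchange.
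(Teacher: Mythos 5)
Your proposal is correct and follows essentially the same route as the paper: the implication \textbf{(i)}$\Rightarrow$\textbf{(ii)} via the Last--Penrose formula, the converse via the new Poincar\'e inequalities for a.s.\ finite functionals iterated \`a la Trauthwein together with the Leibniz/word expansion of $D^{(q)}\Phi$, its vanishing for $q>K$, and the explicit deterministic top-order term $\frac{1}{K!}D^{(K)}\Phi={\rm sym}(f_1\otimes\cdots\otimes f_m)$; and Part~II via the diagram-formula identification $\frac{1}{q!}\E[D^{(q)}\Phi]=h_q$ under \eqref{e:ziocan}--\eqref{e:weakassintro}. The only slight imprecisions --- crediting the Poincar\'e inequality itself with delivering \eqref{eq:SurgaiProducDecompo} and $h_K$ (which in fact come from Last--Penrose once $\Phi\in L^2(\P)$ is known) and leaving the combinatorial re-indexing onto the $(\sigma,A)$-sum as an assertion --- do not change the argument's substance.
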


\medskip

\begin{remark}{\rm 

\begin{enumerate}
\item The content of Theorem~\ref{t:mainintro} for $m=2$ corresponds to \cite[Theorem~2.2]{DoblerPeccati18}. 

\item Note that Condition {\bf (ii)} of Theorem~\ref{t:mainintro} (and, when applicable, Condition {\bf (iii)}) trivially yields that $\Phi\in L^1(\P)$ but {\it does not necessarily yield} the validity of relation~\eqref{e:dumbo}, for which one needs to be able to apply Theorem~\ref{t:diagram} below (lifted from \cite[Theorem 3.1]{LPST}; see also \cite[Theorem 12.7]{LPbook}), whose assumptions are not implied by those of Theorem~\ref{t:mainintro}. As far as we know, no necessary conditions on the kernels $f_1, \dots, f_m$ ensuring $\Phi \in L^1(\mathbb{P})$ have been established to date.
 See also Remark~\ref{r:problems}-(2).

\item The results proved in \cite{DP18a} yield that, in the case $m=2$, the integrability assumption \eqref{e:ziocan} can be removed, and that assumption \eqref{e:weakassintro} is always verified for generic symmetric kernels $f_i \in L^2(\mu^{k_i})$, $i=1,2$. This implies that, in this case, the equivalence between Conditions {\bf (i)}, {\bf (ii)} and {\bf (iii)} holds in full generality. When $m\geq 3$, assumptions and \eqref{e:ziocan} and \eqref{e:weakassintro} are needed in order to represent the expectations $\E\bigl[D^{(q)}_{z_1,\dotsc,z_{q}}F\bigr]$, $q=1,...,K-1$, by means of diagram formulae---leveraging in particular the already mentioned Theorem \ref{t:diagram}. 

\item Requirements \eqref{e:ziocan} and \eqref{e:weakassintro} are equivalent to the {\bf Condition A-(loc)} introduced in Definition \ref{d:aloc} below. It is easily checked that \eqref{e:ziocan} is implied by the stronger property that
$$
f_i \in L^1(\mu^{k_i}), \quad i=1,...,m.
$$
At this level of generality, we do not expect that {\bf Condition A-(loc)} can be easily dispensed with. The reader is referred to \cite[Theorem 3.6]{ST24} for a version of Theorem~\ref{t:diagram} valid under a different set of integrability conditions, potentially yielding further variations of {\bf Part II} of Theorem \ref{t:mainintro}.


\item The analysis performed in \cite{DoblerPeccati18} also implies that, when $m=2$, the kernels $h_q$ appearing in \eqref{e:sym}, $q=1,...,K$, can be expressed in terms of {\bf contraction operators}, according to the following procedure. Assuming without loss of generality that $k_1\leq k_2$, then $h_q = 0$ if $q< k_2-k_1$ and, for $q = k_1+k_2-m$, with $m=0,...,2k_1$, one has that
\[ h_q = h_{k_1+k_2-m}=\sum_{r=\lceil \frac{m}{2} \rceil}^{m \wedge k_1} r! \binom{k_1}{r} \binom{k_2}{r} \binom{r}{m-r}\mathrm{sym} (f_1 \star_r^{m-r}f_2), \]
\end{enumerate}
where
\begin{align*}
 &(f_1\star^l_r f_2)(y_1,\dotsc,y_{r-l},t_1,\dotsc,t_{k_1-r}, s_1,\dotsc,s_{k_2-r})\\
 &:=\int_{\mathcal{Z}^l}\Bigl( f_1\bigl(x_1,\dotsc,x_l, y_1,\dotsc,y_{r-l},t_1,\dotsc,t_{k_1-r}\bigr)\\
 &\hspace{3cm}\cdot f_2\bigl(x_1,\dotsc,x_l,y_1,\dotsc,y_{r-l},s_1,\dotsc,s_{k_2-r}\bigr)\Bigr)d\mu^l(x_1,\dotsc,x_l)\,;
\end{align*}
see also \cite[Chapter 6]{privaultbook}.

}
\end{remark}

\medskip
\begin{example}[Products of single integrals]\label{exam:ProductmFunctions}{\rm
\begin{enumerate}
\item (Generic $m$) Consider setting of {\bf Problem A} with $k_1=\cdots =k_m = 1$, $m\geq 2$. To simplify the discussion, we will write $$\Pi(\underbrace{1,...,1}_{m \, {\rm times}}) := \Pi(m),$$ that is, $\Pi(m)$ is the set all partitions of $[m]$. Given $b\subseteq [m]$, we define the mapping in one variable
$$
z\mapsto f_{(b)}(z)= \left( \prod_{\ell\in b} f_\ell\right)(z):=\prod_{\ell\in b} f_\ell(z), \quad z\in \mathcal{Z}. 
$$
In this case, it is easy to check that \eqref{e:ziocan}---\eqref{e:weakassintro} are verified if and only if \begin{equation}\label{e:exo}
f_{(b)}\in L^1(\mu),\,\,  \forall b\subset [m] \,\, \mbox{such that } b\neq [m].
\end{equation}
If \eqref{e:exo} is verified, then Condition {\bf (iii)} of Theorem \ref{t:mainintro}, then, for $q=1,...,m-1$ one has that 

$$
h_q = {\rm sym}\left\{ \sum_{\substack{\sigma \in \Pi(m)\\ \sigma=\sigma_{\geq 2} \, \cup \, \sigma_1}} \; \sum_{\substack{A\subseteq \sigma_{\geq 2}\\ |A|+ |\sigma_1| = q }} \left( f_{(b_1)}\otimes\cdots \otimes f_{(b_q)} \prod_{b\in \sigma_{\geq 2} \backslash A}  \, \mu(f_{(b)}) \right)\right\}, 
$$
where after the second sum we wrote $\{b_1,...,b_q\} := A\cup \sigma_1$, and we have adopted the following notation:
$$
\mu(g) := \int_\mathcal{Z} g\, d\mu, \quad \mbox{and}\quad  \prod_\emptyset := 1.
$$
We observe that, if \eqref{e:exo} is verified also for $b=[m]$, then the forthcoming Theorem~\ref{t:diagram} yields that 
$$
\mathbb{E}[\Phi] = \sum_{\substack{\sigma = (b_1,...,b_k) \in {\Pi}(m) \\ \mbox{\small s.t.} \,\,\sigma_1=\emptyset} }\, \prod_{\ell=1}^k \mu(f_{(b_\ell)})
$$
As shown in the next two items, the special cases $m=2,3$ can be directly dealt with using {\bf Part I} of Theorem \ref{t:mainintro}.
\item ($m=2$) Specializing the content of Item 1 to the case $m=2$ and applying {\bf Part I} of Theorem \ref{t:mainintro} (or, equivalently, the results of \cite{DoblerPeccati18}), one has that $\Phi = I_1(f_1)I_1(f_2)$ is in $L^2(\P)$ if and only if $f_{([2])}=(f_1f_2) \in L^2(\mu)$ and, in this case, $h_1 = (f_1f_2)$ and $h_2= {\rm sym} (f_1\otimes f_2).$ 

\item ($m=3$) Specializing Item 1 to $m=3$ and applying {\bf Part I} of Theorem \ref{t:mainintro}, one deduces that $\Phi = I_1(f_1)I_1(f_2)I_1(f_3)$ is in $L^2(\P)$ if and only if $f_{([3])}=(f_1f_2f_3)\in L^2(\mu)$ and
$$
{\rm sym}\left\{(f_1f_2)\otimes f_3+(f_1f_3)\otimes f_2+(f_2f_3)\otimes f_1\right\} \in L^2(\mu^2).
$$
In this case, one has that
$$
h_1 = (f_1f_2f_3) + f_1\, \langle f_2,f_3\rangle_{L^2(\mu) }+f_2\, \langle f_1,f_3\rangle_{L^2(\mu) }+f_3\,  \langle f_1,f_2\rangle_{L^2(\mu) },
$$
$h_2 = {\rm sym}\left\{(f_1f_2)\otimes f_3+(f_1f_3)\otimes f_2+(f_2f_3)\otimes f_1\right\}$, and $h_3= {\rm sym} (f_1\otimes f_2\otimes f_3)$.
\end{enumerate}
}
\end{example}
\medskip

\begin{remark}{\rm It is easy to see that Condition {\bf (iii)} in {\bf Part II} of Theorem \ref{t:mainintro} is strictly weaker than requiring that \eqref{e:ersatz} is verified for all $\sigma \in {\Pi}(k_1,\dots,k_m)$ and for all $A \subseteq \sigma_{\geq 2}$.

}
\end{remark}

\medskip

Our analysis reveals that one of the main obstacles in proving Theorem~\ref{t:mainintro} is that, for $m > 2$, the random variable $\Phi$ may satisfy $\mathbb{E}|\Phi| = \infty$. To overcome this difficulty, in Section~\ref{sec:PoincInequaIntegrAssump} we establish a novel class of $p$-Poincaré inequalities, which apply to Poisson functionals that are not necessarily integrable. These estimates enable us to derive a general set of conditions under which a product of generic random variables, each living in a finite sum of Wiener chaoses, belongs to $L^p(\mathbb{P})$ for some $p \in [1,2]$. This result — presented as Theorem~\ref{t:geo} below — includes {\bf Part I} of Theorem~\ref{t:mainintro} as a particular case.

\subsection{Plan of the paper}

Section~\ref{sec:Prelimi} introduces several preliminary notions related to stochastic analysis on configuration spaces. In Section~\ref{sec:PoincInequaIntegrAssump}, we establish the announced new class of $p$-Poincar\'e inequalities. Section~\ref{ss:combone} is devoted to the analysis of iterated add-one cost operators, approached through discrete combinatorial structures associated with lattices of partitions. Finally, Section~\ref{sec:MainResu} presents a general integrability criterion for random variables expressed as products of random elements with a finite Wiener-It\^o expansion --- this result eventually leads us to complete the proof of Theorem~\ref{t:mainintro}.






\section{Preliminary notions}\label{sec:Prelimi}\label{ss:addone}

The reader is referred to \cite{privaultbook, LPbook, PTbook, Lastsv} for a complete discussion of the material presented below.

\smallskip

Consider a measurable space $(\mathcal{Z},\mathscr{Z})$, and write $\mu$ to indicate a $\sigma$-finite measure on it. According to a convention adopted e.g. in \cite{DoblerPeccatiAOP, DoblerPeccati18}, we write
\[\mathscr{Z}_\mu:=\{B\in\mathscr{Z}\,:\,\mu(B)<\infty\},\]
and use the notation 
\begin{equation*}
\eta=\{\eta(B)\,:\,B\in{ \mathscr{Z}}\}
\end{equation*}
to indicate a \textbf{Poisson random measure} on $(\mathcal{Z},\mathscr{Z})$ with \textbf{intensity} (or {\bf control}) $\mu$.  It is a well-known fact that the the distribution of $\eta$ is characterized by the following two properties: (i) for any choice $A_1,\dotsc,A_m\in\mathscr{Z}$ of pairwise disjoint measurable sets, the random variables $\eta(A_1),\dotsc,\eta(A_m)$ are stochastically independent, and (ii) for every $A\in\mathscr{Z}$, the random variable $\eta(A)$ is distributed according to a Poisson law with mean $\mu(A)$,
where we have extended the family of Poisson distributions to the completed half-line $[0,+\infty]$ in the usual way. Given $A\in\mathscr{Z}_\mu$, we use the notation
$\hat{\eta}(A):=\eta(A)-\mu(A)$ and write 
\[\hat{\eta}=\{\hat{\eta}(B)\,:\,B\in\mathscr{Z}_\mu\}\]
to indicate the \textbf{compensated Poisson measure} associated with $\eta$. Without loss of generality, one can assume that $\mathscr{F}=\sigma(\eta)$. 
Denote by $\mathbf{N}_\sigma=\mathbf{N}_\sigma(\mathcal{Z})$ the space of all $\sigma$-finite point measures $\chi$ on $(\mathcal{Z},\mathscr{Z})$ that satisfy $\chi(B)\in\N_0\cup\{+\infty\}$ for all $B\in\mathscr{Z}$. The set $\mathbf{N}_\sigma=\mathbf{N}_\sigma(\mathcal{Z})$ is equipped with the smallest 
$\sigma$-field $\mathscr{N}_\sigma:=\mathscr{N}_\sigma(\calZ)$ enjoying the property that, for every $B\in\scrZ$, the mapping $\mathbf{N}_\sigma\ni\chi\mapsto\chi(B)\in[0,+\infty]$ is measurable. For our approach, it is convenient to regard the Poisson process $\eta$ as a random element 
taking values in the measurable space $(\mathbf{N}_\sigma,\mathscr{N}_\sigma)$.

\smallskip

We also write $\mathbf{F}(\mathbf{N}_\sigma)$ to indicate the collection of all measurable functions $\mathfrak{f}:\mathbf{N}_\sigma\rightarrow\R$ and by $\mathcal{L}^0(\Omega):=\mathcal{L}^0(\Omega,\F)$ the collection of all real-valued, measurable functions $F$ on $\Omega$. Observe that, as $\F=\sigma(\eta)$, each $F\in \mathcal{L}^0(\Omega)$ can be written as $F=\mfk(\eta)$ for some measurable function $\mfk$. Such a mapping $\mfk$, often called a {\bf representative} of $F$, is $\Prob_\eta$-a.s. uniquely defined, where $\Prob_\eta=\Prob\circ\eta^{-1}$ stands for the image measure of $\Prob$ under $\eta$ on the space $(\mathbf{N}_\sigma,\mathscr{N}_\sigma)$. For $F=\mfk(\eta)\in\mathcal{L}^0(\Omega)$ and $z\in\mathcal{Z}$ we define (as in the Introduction) the \textbf{add-one cost operators} $D_z^+$, $z\in\mathcal{Z}$, as: 
\begin{equation}\label{e:addone}
D_z^{+} F:=\mfk(\eta+\delta_z)-\mfk(\eta)\,.
\end{equation}
One immediately verifies the following product rule: for $F,G\in \mathcal{L}^0(\Omega)$ and $z\in\mathcal{Z}$ one has
\begin{align}\label{prodrule}
D^+_z(FG)&=GD^+_zF+FD_z^+G+D_z^+F D_z^+G\,.
\end{align}
More generally, if $m\in\N$ and $z_1,\dotsc, z_m\in\mathcal{Z}$, then we define inductively  $D_{z_1}^{(1)}=D_{z_1}^+$ and
\begin{equation*}
D_{z_1,\dotsc,z_m}^{(m)} F:=D_{z_1}^+\bigl(D_{z_2,\dotsc,z_m}^{(m-1)}F\bigr)\,,\quad m\geq2\,.
\end{equation*}
Writing $[m]:=\{1,\dots,m\}$, it is easily seen that 
\begin{equation}\label{Dm}
D_{z_1,\dotsc,z_m}^{(m)} F =\sum_{J\subseteq[m]}(-1)^{m-\abs{J}}\; \mfk\Bigl(\eta+\sum_{i\in J}\delta_{z_i}\Bigl)
\end{equation}
which shows that the mapping $\Omega\times\mathcal{Z}^m\ni (\omega,z_1,\dotsc,z_m)\mapsto D_{z_1,\dotsc,z_m}^{(m)} F(\omega)
\in\R$ is $\F\otimes\mathscr{Z}^{\otimes m}$-measurable. Moreover, it also implies that $D_{z_1,\dotsc,z_m}^{(m)} F=D_{z_{\sigma(1)},\dotsc,z_{\sigma(m)}}^{(m)} F$ for each permutation $\sigma$ of $[m]$. We observe that, e.g. by virtue of \cite[Lemma 2.4]{LaPen11}, the definition of $D^{(q)}F$ is $\Prob\otimes \mu^q$-a.e. independent of the choice of the representative $\mfk$.
\smallskip

We use the symbol $L$ to denote the {\bf generator of the Ornstein-Uhlenbeck semigroup} associated with $\eta$, whereas $\dom L\subseteq L^2(\P)$ indicates its domain (see \cite[p.~21]{Lastsv}). It is well-known that $-L$ is a symmetric, diagonalizable operator on $L^2(\P)$, having the pure point spectrum 
$\N_0=\{0,1,\dotsc\}$. For $q\in\N_0$, we denote by $C_q:=\ker(L+q{\rm Id})$ the so-called \textbf{$q$-th Wiener chaos} {associated with }$\eta$, where we write ${\rm Id}$ to indicate the identity operator on $L^2(\P)$. One can show that, for $q\in\N$, the linear space $C_q$ is the collection of all \textbf{multiple Wiener-It\^{o} integrals} $I_q(h)$ of order $q$ with respect to ${ \hat{\eta}}$, as defined e.g. in \cite[Section 3]{Lastsv}, where $h$ is a square-integrable function on the product space 
$(\mathcal{Z}^q,\mathscr{Z}^{\otimes q}, \mu^q)$. For 
$c\in\R$ we also let $I_0(c):=c$ in such a way that $C_0=\{I_0(c)\,:\, c\in\R\}$. Multiple integrals enjoy two fundamental properties.
Let $q,p\geq0$ be integers: then,
\begin{enumerate}
 \item $I_q(h)=I_q({\rm sym}(h))$, where we have used the notation \eqref{e:sym};
 \item $I_q(h)\in L^2(\Prob)$, and $\E\bigl[I_q(h)I_p(g)\bigr]= \delta_{p,q}\,q!\,\left\langle {\rm sym}(h),{\rm sym}(g)\right\rangle $, where $\delta_{p,q}$ denotes {Kronecker's delta symbol}, and $\langle \cdot, \cdot \rangle$ is the usual inner product in $L^2(\mu^q)$.
\end{enumerate}
 For an integer $q\geq1$ we write $L^2(\mu^q)$ to indicate the Hilbert space of all (equivalence classes of) square-integrable and real-valued measurable functions on $\mathcal{Z}^q$ and we write $L^2_s(\mu^q)$ for the subspace of those functions in $L^2(\mu^q)$ which are $\mu^q$-a.e. symmetric. Moreover, to simplify notation, we denote by $\Enorm{\cdot}$ and $\langle \cdot,\cdot\rangle$ the usual norm and scalar product 
on $L^2(\mu^q)$, irrespective of the value of $q$. We also set $L^2(\mu^0):=\R$. If $F=I_q(f)$ for some $q\geq 1$ and $f\in L_s^2(\mu^q)$, then for $\mu$-a.e. $z\in\mathcal{Z}$ one has 
\begin{equation}\label{derint}
 D_z^+F=q I_{q-1}\bigl(f(z,\cdot)\bigr),\quad \mbox{a.s.}-\Prob.
\end{equation}
In particular, $D_z^+F$ is a multiple Wiener-It\^{o} integral of order $q-1$. If $q=0$, then it is easy to see that $D^+_z F=0$.

As already recalled, it is a fundamental fact that every $\Phi\in L^2(\Prob)$ admits 
a unique representation 
\begin{equation}\label{chaosdec}
 \Phi=\E[\Phi]+\sum_{q=1}^\infty I_q(h_q)\,,
\end{equation}
where $h_q\in L_s^2(\mu^q)$, $q\geq1$, are suitable symmetric integrands, and the series converges in $L^2(\Prob)$. Identity \eqref{chaosdec} is known as the \textbf{Wiener-It\^o chaos decomposition} of $\Phi \in L^2(\Prob)$. Relation \eqref{chaosdec} entails the abstract decomposition 
\begin{equation*}
L^2(\P)=\bigoplus_{q=0}^\infty C_q\,,
\end{equation*}
where the sum on the right-hand side is orthogonal in $L^2(\Prob)$. 



\section{A new class of $p$-Poincar\'e inequalities}\label{sec:PoincInequaIntegrAssump}

As anticipated in the Introduction, one of the crucial technical elements in our approach is a collection of new $p$-{\bf Poincar\'e inequalities} on the Poisson space --- only requiring the almost sure finiteness of the involved random variables. This result, stated in Theorem \ref{t:ppi} below, generalizes part of the estimates established in \cite{trauthwein} as well as the classical $L^1$ and $L^2$ Poincar\'e inequalities on the Poisson space stated e.g.~in \cite[Theorem 18.7 and Corollary 18.8]{LPbook}. We recall one of the main estimates from \cite{trauthwein}.

\smallskip 

\begin{theorem}[Formula (4.7) in \cite{trauthwein}]\label{t:tara} Let $\gamma$ be a Poisson measure on a measurable space $(\mathcal{A} , \mathscr{A})$, with $\sigma$-finite intensity $\nu$. Assume that $G = G(\gamma)$ is such that $\mathbb{E}|G| < \infty$. Then, for all all $p\in [1,2]$ one has that
\begin{equation}\label{e:tara}
\mathbb{E}\left| G \right|^p  \leq \left| 
\mathbb{E}[G] \right| ^p +  2^{2-p} \int_\mathcal{A} \mathbb{E}[ |D_a^+ G|^p]\, \nu(da).
\end{equation}
    
\end{theorem}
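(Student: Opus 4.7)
The plan is to establish Theorem \ref{t:tara} by combining a pointwise real-variable convexity lemma with a Doob-martingale decomposition of $G$ along a partition-induced filtration of $\gamma$. The analytical core will be the estimate
\[ |x+y|^p \leq |x|^p + p\,|x|^{p-1}\sign(x)\,y + 2^{2-p}\,|y|^p,\qquad x,y\in\R,\ p\in[1,2], \]
which by homogeneity reduces to the one-variable check $|1+t|^p-1-pt\leq 2^{2-p}|t|^p$ and is verified by elementary calculus (with equality at $p=2$ for every $t$, and at $p=1$, $t=-2$). This is the $p$-smoothness estimate for $x\mapsto|x|^p$ on the range $p\in[1,2]$, and it is the source of the interpolating constant $2^{2-p}$.

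Standard truncation arguments reduce the problem to the case where $\nu(\mathcal{A})<\infty$ and $G$ is bounded, via monotone/dominated convergence on both sides of \eqref{e:tara} --- the hypothesis $\E|G|<\infty$ together with $|D^+_a G_{\text{trunc}}|\leq |D^+_a G|$ pointwise suffices to pass to the limit. Then, fixing a measurable partition $\{B_k\}_{k\geq 1}$ of $\mathcal{A}$ with $\nu(B_k)<\infty$ and the filtration $\mathcal{G}_n:=\sigma(\gamma|_{B_1\cup\cdots\cup B_n})$, consider the Doob martingale $M_n:=\E[G\mid\mathcal{G}_n]$ --- with $M_0=\E G$, $M_n\to G$ almost surely, and mean-zero increments $\Delta_n:=M_n-M_{n-1}$. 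Applying the convexity lemma conditionally on $\mathcal{G}_{n-1}$ with $x=M_{n-1}$, $y=\Delta_n$, the linear term vanishes by $\E[\Delta_n\mid\mathcal{G}_{n-1}]=0$, and iteration followed by Fatou's lemma as $n\to\infty$ produces
\[ \E|G|^p \leq |\E G|^p + 2^{2-p}\sum_{k\geq 1}\E|\Delta_k|^p. \]

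It then remains to show that $\sum_k\E|\Delta_k|^p \leq \int_\mathcal{A}\E|D^+_a G|^p\,\nu(da)$. On each slice $B_k$ with $\nu(B_k)<\infty$, a conditional Mecke representation expresses $\Delta_k$ as a sum of localized add-one cost differences indexed by the almost-surely finite set of atoms of $\gamma|_{B_k}$, and a conditional Jensen estimate is aimed at delivering the per-slice bound $\E|\Delta_k|^p\leq\int_{B_k}\E|D^+_a G|^p\,\nu(da)$. I expect the main obstacle to be precisely this per-slice estimate with constant exactly one: the endpoints $p=1$ (triangle inequality) and $p=2$ (martingale orthogonality combined with Jensen) are each direct, but the interpolating range $p\in(1,2)$ requires a careful conditional analysis that avoids introducing a second factor of $2^{2-p}$ --- otherwise the final constant would degrade to $2^{2(2-p)}$ and spoil the sharpness of \eqref{e:tara}. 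This delicate argument in the intermediate range is the technical heart of Trauthwein's method in \cite{trauthwein}.
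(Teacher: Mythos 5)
First, a point of comparison: the paper does not prove Theorem \ref{t:tara} at all --- it is imported verbatim as formula (4.7) of \cite{trauthwein}, and the paper's own contribution begins with Theorem \ref{t:ppi}, which takes \eqref{e:tara} as a black box. So your attempt can only be measured against Trauthwein's argument, not against anything in this paper.

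Judged on its own terms, your proposal has a genuine gap, and it is exactly the step you yourself defer as the ``technical heart''. The skeleton (truncation to bounded $G$, the pointwise $p$-smoothness inequality $|x+y|^p\le |x|^p+p|x|^{p-1}\sign(x)\,y+2^{2-p}|y|^p$, a Doob martingale along a spatial filtration, conditional application of the lemma so that the linear term dies, iteration and Fatou) is a sensible plan and does isolate the source of the constant $2^{2-p}$. But the estimate $\E|\Delta_k|^p\le\int_{B_k}\E|D^+_aG|^p\,\nu(da)$ with constant exactly one is asserted, not proved, and the mechanism you sketch for it (a conditional Mecke representation of $\Delta_k$ as a sum of add-one costs over the atoms of $\gamma|_{B_k}$, followed by conditional Jensen) does not work for a fixed slice of positive mass: $\Delta_k$ is a difference of conditional expectations in which the whole configuration on $B_k$ is resampled, so any representation by add-one costs telescopes over a Poisson number $N_k$ of points, and a Jensen-type bound then brings in a factor of order $\E[N_k^{p-1}]$ (i.e.\ a $\nu(B_k)$-dependent loss), not $1$. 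At $p=2$ the constant-one per-slice bound is rescued by orthogonality/isometry (chaos expansion or Clark--Ocone), but for $p\in(1,2)$ there is no isometry, and producing the bound without an extra factor is precisely the content of the theorem. The known way out is to work with the jump structure directly --- a Clark--Ocone-type representation or, equivalently, a continuous-time spatial martingale in which the pointwise lemma is applied jump by jump and the compensator is handled via the Mecke formula --- which in your language amounts to letting the slices become infinitesimal and controlling the contribution of slices carrying more than one point; none of this appears in the proposal. As written, the crucial inequality is conjectured rather than established, so the proposal is a plan, not a proof.

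Two smaller remarks: the claimed equality case of the pointwise lemma at $p=1$, $t=-2$ is false (there the left side is $2$ and the right side is $4$; at $p=1$ the constant $2$ is only attained asymptotically as $t\to-\infty$), so the lemma itself, while true with constant $2^{2-p}$, still needs a careful verification over the whole range $p\in(1,2)$ rather than a two-endpoint check; and the preliminary reduction to $\nu(\mathcal{A})<\infty$ is neither needed nor justified as stated --- the countable partition already does that work.
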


Note that \eqref{e:tara} implies that, if the integral on the right-hand side of \eqref{e:tara} is finite, then $\mathbb{E} |G|^p <\infty$. The next result allows one to extend the content of Theorem~\ref{t:tara} to the case of a.s.~finite (and not necessarily integrable) random variables. From now on, we let the notation and assumptions of Section \ref{ss:addone} prevail.

\smallskip 

\begin{theorem}[\bf $p$-Poincar\'e inequalities for a.s.~finite variables]\label{t:ppi}  Suppose that $F\in \mathcal{L}^0(\Omega)$, so that $\mathbb{P}(|F|<\infty) = 1$. Then, for all $p\in [1,2]$,
\begin{equation}\label{e:ppi}
\mathbb{E}\left| F - F' \right|^p  \leq 2^{3-p} \int_\mathcal{Z} \mathbb{E}[ |D_z^+ F|^p]\, \mu(dz),
\end{equation}
where $F'$ is an independent copy of $F$. In particular, if the right-hand side of \eqref{e:ppi} is finite for some $p\in [1,2]$, one has that $F\in L^{p}(\mathbb{P})$.
\end{theorem}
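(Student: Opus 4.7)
The plan is to bootstrap Theorem~\ref{t:tara} via a Lipschitz truncation and a product--space construction, together with a standard independence trick to pass from $F-F'$ to $F$ itself.

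First, I would work on the product Poisson space carrying $\eta$ and an independent copy $\eta'$: equivalently, the superposition $\gamma := \eta \oplus \eta'$ is a Poisson random measure on $\mathcal{A} := \mathcal{Z}\sqcup\mathcal{Z}$ with intensity $\nu := \mu\oplus\mu$. For each $n\geq 1$, define the Lipschitz truncation $F_n := ((-n)\vee F)\wedge n$, which is bounded (hence in $L^1$), and set $G_n := F_n - F_n'$, where $F_n' $ denotes the same functional evaluated at $\eta'$. Since $F_n$ and $F_n'$ are i.i.d., $\mathbb{E}[G_n]=0$. Applying Theorem~\ref{t:tara} to $G_n$ on the product space yields
\[
\mathbb{E}|G_n|^p \;\leq\; 2^{2-p}\int_{\mathcal{A}} \mathbb{E}\bigl[\,|D_a^+ G_n|^p\bigr]\,\nu(da).
\]
The add--one operator on $\gamma$ at a point $a$ in the first copy affects only $F_n$, and symmetrically for the second copy, so the right--hand integral splits as
\[
\int_{\mathcal{Z}} \mathbb{E}\bigl[|D_z^+ F_n|^p\bigr]\,\mu(dz)+\int_{\mathcal{Z}} \mathbb{E}\bigl[|D_z^+ F_n'|^p\bigr]\,\mu(dz) \;=\; 2\int_{\mathcal{Z}} \mathbb{E}\bigl[|D_z^+ F_n|^p\bigr]\,\mu(dz).
\]
Combining gives the inequality \eqref{e:ppi} for $F_n$ in place of $F$, with the prefactor $2^{3-p}$.

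Next I would remove the truncation. The key observation is that the map $x\mapsto(-n\vee x)\wedge n$ is $1$--Lipschitz, so for every $z$ and every realisation one has the pointwise bound
\[
\bigl|D_z^+ F_n\bigr| \;=\; \bigl|\,(-n\vee F(\eta+\delta_z))\wedge n - (-n\vee F(\eta))\wedge n\,\bigr| \;\leq\; |D_z^+ F|.
\]
Hence $\int_{\mathcal{Z}} \mathbb{E}|D_z^+ F_n|^p\,\mu(dz)\leq \int_{\mathcal{Z}} \mathbb{E}|D_z^+ F|^p\,\mu(dz)$ for every $n$. On the other hand, since $F$ and $F'$ are a.s.~finite, $F_n\to F$ and $F_n'\to F'$ almost surely, whence $F_n-F_n'\to F-F'$ almost surely. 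Fatou's lemma then yields
\[
\mathbb{E}|F - F'|^p \;\leq\; \liminf_{n\to\infty}\mathbb{E}|F_n-F_n'|^p \;\leq\; 2^{3-p}\int_{\mathcal{Z}} \mathbb{E}\bigl[|D_z^+ F|^p\bigr]\,\mu(dz),
\]
which is \eqref{e:ppi}.

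For the final clause, assume the right--hand side is finite; then $F-F'\in L^p(\mathbb{P})$. Since $F'$ is a.s.\ finite, pick $M>0$ large enough so that $\mathbb{P}(|F'|\leq M)>0$, and use the elementary inequality $|F|^p\leq 2^{p-1}(|F-F'|^p+|F'|^p)$ together with independence of $F$ and $F'$:
\[
\mathbb{E}|F|^p\cdot\mathbb{P}(|F'|\leq M) \;=\; \mathbb{E}\bigl[|F|^p\mathbf{1}_{\{|F'|\leq M\}}\bigr] \;\leq\; 2^{p-1}\bigl(\mathbb{E}|F-F'|^p + M^p\bigr)<\infty,
\]
so $F\in L^p(\mathbb{P})$. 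The only delicate step is the truncation: one must choose a truncation that is simultaneously bounded (to apply Theorem~\ref{t:tara}), $1$--Lipschitz (to dominate the add--one cost by $|D_z^+F|$ uniformly in $n$), and compatible with almost sure convergence to $F$; the choice $F_n = ((-n)\vee F)\wedge n$ satisfies all three requirements, which is why the argument goes through cleanly.
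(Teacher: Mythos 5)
Your proposal is correct and follows essentially the same route as the paper's proof: the same two-independent-copies construction via a Poisson measure on a doubled space, the same symmetric truncation $((-n)\vee F)\wedge n$ with the $1$-Lipschitz domination of the add-one cost, an application of Theorem~\ref{t:tara} to the bounded, mean-zero difference, and Fatou's lemma, with the final clause obtained by the same independence argument (conditioning on a bounded value of $F'$ versus the paper's Fubini step are interchangeable). No gaps.
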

\begin{proof} For the rest of the proof we fix a representative $\mfk$ of $F$. Consider the product space $ \mathcal{A} =\{0,1\}\times \mathcal{Z}$, let $\nu$ denote the measure on $\mathcal{A}$ given by $\nu(dj, dz) := (\delta_0(dj) + \delta_1(dj))\mu(dz)$, and write $\gamma$ to indicate a Poisson measure on $\mathcal{A}$ with intensity $\nu$. Then, the mappings $B\mapsto \gamma_i(B) :=  \gamma(\{i\}\times B)$, $i=0,1$, define two independent copies of $\eta$. Without loss of generality, we can now assume that $F = \mfk(\gamma_0)$ and $F' = \mfk(\gamma_1)$. Writing $G = F-F'$ one has that, for an arbitrary $(i,z)\in \mathcal{A}$, $D^+_{(i,z)}G = \mfk(\gamma_0+ \delta_z ) - \mfk(\gamma_0) $ if $i=0$ and $D^+_{(i,z)}G = \mfk(\gamma_1)-\mfk(\gamma_1+ \delta_z )$, if $i=1$. For $s>0$ and $i=0,1$ we set $\mfk_s(\gamma_i) := -s{\bf 1}_{\mfk (\gamma_i)<-s} + \mfk(\gamma_i){\bf 1}_{-s\leq \mfk(\gamma_i)\leq s} + s {\bf 1}_{\mfk(\gamma_i)>s}$. One has that: (a) for all $s>0$, $G_{(s)}:= \mfk_s(\gamma_0) - \mfk_s(\gamma_1)$ is a bounded random variable with zero expectation, (b) applying \eqref{e:tara} to $G = G_{(s)}$ yields
\begin{eqnarray*}
 \mathbb{E}[ | G_{(s)} |^p]  &\leq &  2^{2-p} \sum_{i=0,1} \int_{\mathcal{Z}} \mathbb{E} [| \mfk_s(\gamma_i + \delta_z)- \mfk_s(\gamma_i) |^p] \, \mu(dz) \\
&=& 2^{3-p}\!\!\int_{\mathcal{Z}} \mathbb{E} [| \mfk_s(\gamma_1 + \delta_z)- \mfk_s(\gamma_1) |^p] \, \mu(dz),
\end{eqnarray*}
and (c) $G_{(s)} \to G$, a.s.-$\mathbb{P}$, as $s\to\infty$. Since $|\mfk_s(\gamma_1 + \delta_z)- \mfk_s(\gamma_1) |^p\leq |\mfk(\gamma_1 + \delta_z)- \mfk(\gamma_1) |^p $, a.s.-$\mathbb{P}$, and the latter quantity has the same distribution as $|D_z^+ F|^p$, we infer from Fatou's Lemma the desired relation \eqref{e:ppi}. 
To conclude, we observe that, if the right-hand side of \eqref{e:ppi} is finite, then independence and Fubini's theorem imply that, for at least one $x\in \R$, one has that
$$
\mathbb{E} [ |F - x|^p]<\infty,
$$
and the triangle inequality immediately yields that $F\in L^{p}(\mathbb{P})$.
\end{proof}

We recall a fundamental result originally proved in \cite[Theorem 1.3]{LPfock}.

\smallskip 

\begin{theorem}[{\bf Last-Penrose formula} \cite{LPfock}]\label{t:lastpenrose} Let $F\in L^2(\Prob)$. For all $q\geq1$ and for $\mu^q$-a.e. $z_1,\dotsc,z_q\in\calZ$, one has that $D^{(q)}_{z_1,\dotsc,z_q}F\in L^1(\Prob)$. Moreover, the kernel $h_q$ in \eqref{chaosdec} can be taken to be 
\begin{equation}\label{kerform}
 h_q(z_1,\dotsc,z_q)=\frac{1}{q!}\E\bigl[D^{(q)}_{z_1,\dotsc,z_q}F\bigr],
\end{equation}
for all $z_1,\dotsc,z_q\in\calZ$ such that the right-hand side of \eqref{kerform} is finite.     
\end{theorem}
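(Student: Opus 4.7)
The plan is to first reduce to the case where $F$ belongs to a single Wiener chaos, then extend by linearity to finite truncations of the chaos expansion, and finally pass to the $L^2(\Prob)$-limit via a Mecke-based localization. For $F = I_p(f)$ with symmetric $f \in L_s^2(\mu^p)$, iterating \eqref{derint} and an induction on $q\leq p$ yield
\[
D^{(q)}_{z_1,\dots,z_q}I_p(f) \;=\; \frac{p!}{(p-q)!}\,I_{p-q}\bigl(f(z_1,\dots,z_q,\cdot)\bigr),
\]
$\Prob$-almost surely and for $\mu^q$-almost every $(z_1,\dots,z_q)$, while $D^{(q)}I_p(f)=0$ whenever $p<q$. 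Because multiple Wiener--It\^o integrals of order $\geq 1$ are centered, taking expectations gives $\E[D^{(q)}_{z_1,\dots,z_q}I_p(f)] = q!\,f(z_1,\dots,z_q)\,\delta_{p,q}$, $\mu^q$-a.e. Linearity then implies that for the partial sum $F_N:=\sum_{p=0}^N I_p(f_p)\in L^2(\Prob)$ one has $\tfrac{1}{q!}\E[D^{(q)}_{z_1,\dots,z_q}F_N] = f_q(z_1,\dots,z_q)$ for every $N\geq q$, $\mu^q$-a.e.

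Next I would pass from finite truncations to a general $F\in L^2(\Prob)$. Two facts must be established: (a) for $\mu^q$-a.e.\ $(z_1,\dots,z_q)$ the random variable $D^{(q)}_{z_1,\dots,z_q}F$ lies in $L^1(\Prob)$, and (b) $\tfrac{1}{q!}\E[D^{(q)}F] = f_q$ holds a.e. Using the alternating-sum representation \eqref{Dm}, both reduce to controlling, on each set $A\subseteq \calZ^q$ of finite $\mu^q$-measure and for every $J\subseteq[q]$, integrals of the form
\[
\int_A \E\Bigl[\bigl|\mfk\bigl(\eta+{\textstyle\sum_{i\in J}}\delta_{z_i}\bigr)\bigr|\Bigr]\,d\mu^q(z_1,\dots,z_q),
\]
together with the analogous quantity with $\mfk$ replaced by $\mfk-\mfk_N$, where $\mfk_N$ is a representative of $F_N$. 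The multivariate Mecke formula rewrites each such integral as an expectation of $|F|$ (resp.\ $|F-F_N|$) against the $|J|$-th factorial measure of $\eta$ restricted to $A$, whose second moments on $A$ are controlled by powers of $\mu(A)<\infty$. Combined with Cauchy--Schwarz and the $L^2(\Prob)$-convergence $F_N\to F$, this yields both the local integrability establishing (a) and the vanishing $\int_A \E[|D^{(q)}(F-F_N)|]\,d\mu^q\to 0$. Passing to an a.e.\ subsequence and invoking the identity proved in the first step, we obtain $f_q = \tfrac{1}{q!}\E[D^{(q)}F]$ on $A$; exhausting $\calZ^q$ by finite-measure sets via $\sigma$-finiteness completes the proof.

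The hard part will be the Mecke-based localization in the limit step. Since $D^{(q)}F$ need not lie in $L^2(\Prob)$ and $\int\E[|D^{(q)}F|]\,d\mu^q$ can be infinite when $\mu(\calZ)=\infty$, one cannot argue globally; the factorial-measure identity supplied by Mecke is what bridges the pointwise definition of $D^{(q)}$ through shifted configurations with the $L^2$-structure of the chaos expansion, allowing the $L^2$-convergence of $F_N$ to be converted into local $L^1$-convergence of $\E[D^{(q)}F_N]$. Once this bridge is in place, the identification $h_q = \tfrac{1}{q!}\E[D^{(q)}F]$ is a direct consequence of the single-chaos computation.
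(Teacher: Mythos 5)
The paper itself does not prove this statement: it is quoted from Last and Penrose \cite{LPfock} (their Theorem 1.3), so there is no in-paper argument to compare against; I therefore assess your proposal on its own and against the original route. Your argument is correct for the statement as formulated here, i.e.\ taking the existence of the decomposition \eqref{chaosdec} for granted and identifying its kernels. The single-chaos computation $\E[D^{(q)}_{z_1,\dots,z_q}I_p(f_p)]=q!\,\delta_{p,q}\,f_q(z_1,\dots,z_q)$ is right, provided you note that for $p>q$ Fubini gives $f_p(z_1,\dots,z_q,\cdot)\in L^2_s(\mu^{p-q})$ for $\mu^q$-a.e.\ $(z_1,\dots,z_q)$ (so the resulting integral of order $p-q\geq 1$ is indeed centered), and that iterating \eqref{derint} requires the fact, recorded in the paper via \cite[Lemma 2.4]{LaPen11}, that $D^{(q)}F$ is $\Prob\otimes\mu^q$-a.e.\ independent of the representative $\mfk$. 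The Mecke step is the right bridge: with $|J|=j$, the multivariate Mecke equation turns $\int_{B^q}\E\bigl[\bigl|\mfk(\eta+\sum_{i\in J}\delta_{z_i})\bigr|\bigr]d\mu^q$ into $\mu(B)^{q-j}\,\E\bigl[|F|\,\eta^{(j)}(B^j)\bigr]$, which Cauchy--Schwarz bounds by $\mu(B)^{q-j}\|F\|_{L^2(\Prob)}\bigl(\E[\eta^{(j)}(B^j)^2]\bigr)^{1/2}<\infty$; applied to $\mfk-\mfk_N$ this converts $\|F-F_N\|_{L^2(\Prob)}\to0$ into $\int_{B^q}\E\bigl|D^{(q)}(F-F_N)\bigr|d\mu^q\to0$, and since $\E[D^{(q)}F_N]=q!f_q$ for $N\geq q$ you get \eqref{kerform} a.e.\ on $B^q$, hence a.e.\ by exhaustion. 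One point to tighten: perform the localization on product sets $A=B^q$ with $\mu(B)<\infty$ (a $\sigma$-finite exhaustion of $\calZ^q$ of this form exists), rather than on arbitrary sets of finite $\mu^q$-measure, since the Cauchy--Schwarz step uses the finiteness of the second factorial moment $\E[\eta^{(j)}(B^j)^2]$, a polynomial in $\mu(B)$; for a general $A$ one would have to control its sections. Finally, note that your route genuinely differs from \cite{LPfock}: there the chaos expansion is not presupposed — the Fock-space isometry $\E[FG]=\E[F]\E[G]+\sum_{n\geq1}\frac{1}{n!}\langle\E[D^{(n)}F],\E[D^{(n)}G]\rangle$ is first proved for a dense class of bounded functionals depending on $\eta$ restricted to finite-measure sets and then extended by continuity, which yields completeness of the chaoses and \eqref{kerform} simultaneously. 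Your truncation-plus-Mecke argument is shorter and suffices for the kernel identification asserted here, at the price of relying on \eqref{chaosdec} as an input.
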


\smallskip

The following statement is a substantial generalization of \cite[Lemma 5.1]{DoblerPeccati18}, and is one of the main tools used in our work.

\smallskip

\begin{proposition}\label{p:doblerpeccati}
Fix $p\in [1,2]$. Suppose that $F$ is a $\sigma(\eta)$-measurable random variable such that $\mathbb{P}(|F|<\infty) = 1$ and that there exists $M\geq 1$ such that:
\begin{enumerate}
\item[\rm (A)] For all $z_1,\dotsc,z_{M+1}\in\mathcal{Z}$ one has $D^{(M+1)}_{z_1,\dotsc,z_{M+1}}F=0$, a.s.-$\mathbb{P}$.
\item[\rm (B)] For all $q=1,\dotsc,M$ and $\mu^q$-a.e. $z_1,\dotsc,z_{q}\in\mathcal{Z}$, $D^{(q)}_{z_1,\dotsc,z_{q}}F\in L^1(\P)$\\ and 
$(z_1,\dotsc,z_q)\mapsto\E\bigl[D^{(q)}_{z_1,\dotsc,z_{q}}F\bigr]\in L^p(\mu^q)$.
\end{enumerate}
Then, $F\in L^p(\P)$.  
\end{proposition}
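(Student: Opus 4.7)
The plan is to apply the new $p$-Poincar\'e inequality of Theorem~\ref{t:ppi} iteratively along the ``order'' of the iterated add-one cost. Introduce
\begin{equation*}
T_q := \int_{\mathcal{Z}^q} \mathbb{E}\bigl[|D^{(q)}_{z_1,\dotsc,z_q} F|^p\bigr]\, d\mu^q(z_1,\dotsc,z_q), \qquad q=0,1,\dotsc,M+1,
\end{equation*}
with the convention $T_0 = \mathbb{E}|F|^p$; the goal is to show $T_0<\infty$. By hypothesis (A) one immediately has $T_{M+1}=0$, while (B) together with the a.s.\ finiteness of $F$ ensures that, for each $q\in\{1,\dotsc,M\}$ and $\mu^q$-a.e.\ $(z_1,\dotsc,z_q)$, the random variable $D^{(q)}_{z_1,\dotsc,z_q} F$ is $\sigma(\eta)$-measurable, almost surely finite, and integrable, so that Theorem~\ref{t:ppi} is applicable at every intermediate level.

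For such fixed $1\leq q\leq M$ and $(z_1,\dotsc,z_q)$, set $G:=D^{(q)}_{z_1,\dotsc,z_q}F$. Since $D^+_w G = D^{(q+1)}_{z_1,\dotsc,z_q,w} F$, Theorem~\ref{t:ppi} yields
\begin{equation*}
\mathbb{E}|G-G'|^p \;\leq\; 2^{3-p}\!\int_{\mathcal{Z}}\mathbb{E}\bigl[|D^{(q+1)}_{z_1,\dotsc,z_q,w}F|^p\bigr]\mu(dw).
\end{equation*}
Since $G\in L^1(\mathbb{P})$, conditioning on $G$ and invoking Jensen's inequality for the convex map $|\cdot|^p$ (together with the independence of $G'$ from $G$) produces $\mathbb{E}|G-\mathbb{E}[G]|^p \leq \mathbb{E}|G-G'|^p$. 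Combining this with the elementary inequality $|x|^p\leq 2^{p-1}(|x-\mathbb{E}[G]|^p + |\mathbb{E}[G]|^p)$ and integrating over $(z_1,\dotsc,z_q)$ yields the recursion
\begin{equation*}
T_q \;\leq\; 2^{p-1}\bigl\|\mathbb{E}[D^{(q)}F]\bigr\|_{L^p(\mu^q)}^p \,+\, 4\,T_{q+1},\qquad q=1,\dotsc,M,
\end{equation*}
whose first summand is finite thanks to (B).

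Iterating downward from $q=M$ and using $T_{M+1}=0$ gives $T_1\leq\sum_{q=1}^{M} 2^{p-1}\cdot 4^{q-1}\|\mathbb{E}[D^{(q)}F]\|_{L^p(\mu^q)}^p<\infty$. Applying Theorem~\ref{t:ppi} one last time to $F$ itself (a.s.\ finite by hypothesis) then delivers $\mathbb{E}|F-F'|^p\leq 2^{3-p}T_1<\infty$, from which the closing assertion of Theorem~\ref{t:ppi} yields $F\in L^p(\mathbb{P})$. The main subtlety is that hypothesis (B) does \emph{not} provide integrability of $F$, so the classical $L^p$ Poincar\'e inequalities of \cite{LPbook} cannot initiate the recursion; the novelty of Theorem~\ref{t:ppi}, which controls $\mathbb{E}|G-G'|^p$ without any a~priori integrability assumption on $G$, is precisely what permits this backward cascade down to $q=0$.
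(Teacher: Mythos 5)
Your proof is correct and follows essentially the same strategy as the paper: a backward cascade of Poincar\'e-type inequalities along the order of the iterated add-one cost, with hypothesis (A) terminating the recursion, hypothesis (B) controlling the mean terms, and the new Theorem~\ref{t:ppi} invoked at the top level precisely because $F$ need not be integrable. The only cosmetic difference is that at the intermediate (integrable) levels the paper applies Trauthwein's inequality \eqref{e:tara} directly to $D^{(q)}F$, whereas you re-derive an equivalent bound from Theorem~\ref{t:ppi} combined with conditional Jensen and convexity, which yields slightly worse constants but changes nothing substantive.
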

\begin{proof} Iterating $M$ times \eqref{e:tara}, one sees that Assumptions (A) and (B) in the statement imply that the quantity $$
\int_\mathcal{Z} \mathbb{E}[ |D_z^+ F|^p]\, \mu(dz)
$$
is bounded by 
$$
2^{M(2-p)} \sum_{q=1}^M \int_{\mathcal{Z}^q} \left| \mathbb{E}[ D_{z_1,\dots,z_q}^{(q)} F] \right|^p \mu(dz_1)\cdots \mu(dz_q) <\infty, 
$$
and the conclusion follows from Theorem \ref{t:ppi}. 
\end{proof}

In what follows, given integers $0\leq i\leq k$, we will use the {\bf falling factorial} symbol $k_{(i)}$, which is defined as $k_{(0)} := 1$, and $k_{(i)} := k(k-1)\cdots (k-i+1)$, when $i\geq 1$.

\section{Combinatorial representation of add-one costs for products}\label{ss:combone}

\subsection{General formulae} Let $m \geq 1$ be an integer and recall that $[m]= \{1,\dots,m\}$. For every subset $\emptyset\neq A\subseteq [m]$ and every $z\in \mathcal{Z}$, we define the mapping 
$$
D_z^A : \underbrace{\mathcal{L}^0(\Omega)\times \cdots \times \mathcal{L}^0(\Omega)}_{\mbox{\textit{m} times}} := \mathcal{L}^0(\Omega)^m  \longrightarrow \mathcal{L}^0(\Omega)^m 
$$
as follows: for all $(F_1,\dots,F_m)\in \mathcal{L}^0(\Omega)^m$, $D_z^A(F_1,\dots,F_m) = (X_1,\dots,X_m)$, where $X_i = D^+_z F_i$ if $i\in A$ and $X_i = F_i$ otherwise. For instance, if $m= 3$ and $A= \{1,3\}$, then $D_z^A(F_1,F_2,F_3) = (D^+_zF_1, F_2, D^+_z F_3)$. For the rest of the section, we will denote by $Q: \mathcal{L}^0(\Omega)^m \to \mathcal{L}^0(\Omega)$ the usual pointwise multiplication operator given by $Q(X_1,\dots,X_m) := \prod_{i=1}^m X_i$. 
\smallskip

The following lemma is a straightforward extension of \eqref{prodrule} and can be proved by recursion (details are left to the reader).

\begin{lemma}\label{l:dmprod} For a generic $(F_1,\dots,F_m)\in \mathcal{L}^0(\Omega)^m$, set $\Phi := Q(F_1,..,F_m) = \prod_{i=1}^m F_i$. Then, for all $z\in \mathcal{Z}$ one has that
\begin{equation}\label{e:demprod}
D^+_z \Phi = \sum_{\emptyset\neq A \subseteq [m]} Q(D_z^A (F_1,\dots,F_m)).
\end{equation}

\end{lemma}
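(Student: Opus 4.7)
The plan is to prove the identity \eqref{e:demprod} by induction on $m$, using the two-factor product rule \eqref{prodrule} as the engine and letting the combinatorics of nonempty subsets of $[m]$ track the three cases that appear at each step.

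For the base case $m=1$, the claim reduces to the tautology $D_z^+ F_1 = D_z^+ F_1$, since $\{1\}$ is the only nonempty subset of $[1]$ and $Q(D_z^{\{1\}}(F_1)) = D_z^+ F_1$. For the inductive step, assuming the formula holds for some $m\geq 1$, I would write $\Phi_{m+1} := \prod_{i=1}^{m+1} F_i = \Phi_m \cdot F_{m+1}$ with $\Phi_m := \prod_{i=1}^{m} F_i$, and apply \eqref{prodrule} to get
\begin{equation*}
D_z^+ \Phi_{m+1} = F_{m+1}\, D_z^+ \Phi_m + \Phi_m\, D_z^+ F_{m+1} + D_z^+ \Phi_m \cdot D_z^+ F_{m+1}.
\end{equation*}
The induction hypothesis expands $D_z^+ \Phi_m$ as $\sum_{\emptyset \neq A \subseteq [m]} Q(D_z^A(F_1,\dots,F_m))$.

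The key combinatorial observation is then that every nonempty $B \subseteq [m+1]$ falls into exactly one of three disjoint classes: (a) $B \subseteq [m]$ and $B\neq\emptyset$, (b) $B = \{m+1\}$, or (c) $m+1 \in B$ and $B \cap [m] \neq \emptyset$. Multiplying the expanded first term on the right of the display above by $F_{m+1}$ gives exactly the terms $Q(D_z^B(F_1,\dots,F_{m+1}))$ with $B$ in class (a); the second term handles class (b); and distributing $D_z^+ F_{m+1}$ over the expansion in the third term produces precisely the terms indexed by class (c), since $A \mapsto A \cup \{m+1\}$ is a bijection between nonempty subsets of $[m]$ and the subsets in class (c). Assembling these three contributions yields the claimed sum over all nonempty $B \subseteq [m+1]$, completing the induction.

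There is no real obstacle here: the only point demanding care is the bookkeeping that matches the three cases of \eqref{prodrule} with the three classes of nonempty subsets of $[m+1]$, and in particular the observation that $D_z^A$ acts as the identity on coordinates outside $A$, so that multiplying $Q(D_z^A(F_1,\dots,F_m))$ by $F_{m+1}$ is the same as $Q(D_z^A(F_1,\dots,F_{m+1}))$ with $A$ viewed as a subset of $[m+1]$ not containing $m+1$. Since all operations are pointwise on $\Omega$ and every $F_i$ is only assumed to lie in $\mathcal{L}^0(\Omega)$, no integrability hypothesis is required, consistently with the fact that \eqref{prodrule} itself holds pointwise.
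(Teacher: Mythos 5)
Your proof is correct and follows exactly the route the paper intends: the paper states that the lemma "is a straightforward extension of \eqref{prodrule} and can be proved by recursion," and your induction on $m$ via the two-factor product rule, with the three-way classification of nonempty subsets of $[m+1]$, is precisely that recursion carried out in full. No gaps; the bookkeeping and the pointwise (no-integrability) remarks are all in order.
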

We will now introduce a formalism that generalizes the approach initiated in \cite{DoblerPeccati18}. Fix $m\geq 1$ as above. Given $q\geq 1$, a {\bf word} $W$ of length $|W| = q$ in the {\bf alphabet} $\{A \subseteq [m] \mid  A \neq \emptyset\}$ is a vector $W = (A_1,\dots,A_q)$ of non-empty subsets of $[m]$. Given a word $W = (A_1,\dots,A_q)$ and $z_1,\dots,z_q \in \mathcal{Z}$, we set $D_{z_1}^{[W]}(F_1,\dots,F_m) := D_{z_1}^{A_1}(F_1,\dots,F_m)$ if $q=1$, and
\begin{equation}\label{e:wordp}
D_{z_1,\dots,z_q}^{[W]}(F_1,\dots,F_m) :=D_{z_1}^{A_1} ( D^{[W']}_{z_2,\dots, z_q}(F_1,\dots,F_m)), 
\end{equation}
where $W' := (A_2,\dots,A_q)$. Using \eqref{prodrule}, \eqref{e:demprod} and a recursion argument we infer that, given $(F_1,\dots,F_m)\in \mathcal{L}^0(\Omega)^m$ and setting $\Phi := Q(F_1,..,F_m) = \prod_{i=1}^m F_i$, one has that, for all $q\geq 1$,
\begin{equation} \label{e:dpprod}
D^{(q)}_{z_1,\dots,z_q} \Phi = \sum_{|W| = q } Q(D_{z_1,\dots,z_q}^{[W]}(F_1,\dots,F_m)),
\end{equation}
where $Q$ is the multiplication operator defined above and the sum runs over all words with length $q$. The following statement is a direct consequence of \eqref{derint} and \eqref{e:dpprod}.

\smallskip 

\begin{lemma}\label{l:order} Fix $m\geq 2$ and consider (not necessarily distinct) integers $k_1,\dots,k_m\geq 0$. Let $F_1,\dots,F_m \in L^2(\Prob)$ be such that
$$
F_i \in \bigoplus_{q = 0}^{k_i} C_q, \quad i=1,\dots,m,
$$
and set $\Phi := \prod_{i=1}^m F_i$. Then, for all $M>k_1+\cdots+ k_m$, one has that $D^{(M)}_{z_1,\dots,z_M} \Phi = 0$ for $\mu^M$-a.e. $z_1,..,z_M$.   
\end{lemma}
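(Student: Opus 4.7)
The plan is to combine the expansion \eqref{e:dpprod} with a counting argument based on \eqref{derint}. First, I would fix $M>K := k_1+\cdots+k_m$ and apply \eqref{e:dpprod} to get
\[
D^{(M)}_{z_1,\dots,z_M}\Phi \;=\; \sum_{|W|=M} Q\bigl(D^{[W]}_{z_1,\dots,z_M}(F_1,\dots,F_m)\bigr),
\]
where the sum runs over all words $W=(A_1,\dots,A_M)$ of length $M$ in the alphabet of nonempty subsets of $[m]$. Since this is a finite sum, it suffices to show that each summand vanishes $\mu^M$-a.e.

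Next, I would unwind the recursive definition \eqref{e:wordp}. For a fixed word $W=(A_1,\dots,A_M)$ and index $i\in [m]$, the operator $D^+_{z_j}$ is applied to the $i$-th slot precisely when $i\in A_j$. Writing $J_i := \{j\in [M] : i\in A_j\}$ and $n_i := |J_i|$, and using that the operators $D^+_{\cdot}$ commute and act componentwise on tuples, the $i$-th component of $D^{[W]}_{z_1,\dots,z_M}(F_1,\dots,F_m)$ equals $D^{(n_i)}_{(z_j)_{j\in J_i}}F_i$. Hence
\[
Q\bigl(D^{[W]}_{z_1,\dots,z_M}(F_1,\dots,F_m)\bigr) \;=\; \prod_{i=1}^m D^{(n_i)}_{(z_j)_{j\in J_i}}F_i.
\]

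The key observation is now a pigeonhole count on the $n_i$'s. Since every block $A_j$ of the word is nonempty,
\[
\sum_{i=1}^m n_i \;=\; \sum_{j=1}^M |A_j| \;\geq\; M \;>\; K \;=\; \sum_{i=1}^m k_i,
\]
so there must exist some index $i_0$ with $n_{i_0}>k_{i_0}$. For that $i_0$, decompose $F_{i_0}=\sum_{q=0}^{k_{i_0}}I_q(f_{i_0}^{(q)})$ using the hypothesis $F_{i_0}\in\bigoplus_{q=0}^{k_{i_0}}C_q$, and iterate \eqref{derint}: each $D^+_z$ lowers the order of a multiple integral by one (and annihilates $I_0$), so $D^{(n_{i_0})}_{(z_j)_{j\in J_{i_0}}}F_{i_0}=0$ almost surely, for $\mu^{n_{i_0}}$-a.e. choice of the corresponding $z_j$'s, and consequently $\mu^M$-a.e. in $(z_1,\dots,z_M)$. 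This forces the entire product to vanish $\mu^M$-a.e.

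Summing over the finitely many words $W$ of length $M$ (and taking a union of the corresponding null sets in $\mathcal{Z}^M$) yields $D^{(M)}_{z_1,\dots,z_M}\Phi=0$ for $\mu^M$-a.e. $(z_1,\dots,z_M)$, as claimed. The argument is essentially bookkeeping, and I do not foresee a serious obstacle; the only mildly delicate point is tracking that the exceptional null sets arising from \eqref{derint} can be collected into a single $\mu^M$-null set, which is immediate since the sum over $W$ is finite.
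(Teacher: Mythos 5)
Your argument is correct and follows essentially the same route as the paper's proof: expand $D^{(M)}\Phi$ via \eqref{e:dpprod} over words of length $M$, note by pigeonhole that some index $i_0$ must receive $n_{i_0}>k_{i_0}$ add-one cost operators since every letter of the word is nonempty, and conclude from iterating \eqref{derint} that the corresponding factor, and hence each summand, vanishes $\mu^M$-a.e. Your explicit bookkeeping of the sets $J_i$ and of the null sets is just a more detailed rendering of the paper's one-line observation.
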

\begin{proof} From \eqref{derint} one deduces immediately that, for all $\ell>k_i$, one has that $D^{(\ell)}_{z_1,\dots,z_\ell}F_i = 0$ for $\mu^\ell$-a.e. $z_1,\dots,z_\ell$. The conclusion is obtained by observing that, if $W$ is a word of length $M>k_1+\cdots +k_m$, then the random variable $Q(D_{z_1,\dots,z_M}^{[W]}(F_1,\dots,F_m))$ is the product of $m$ factors of which at least one has the form $D^{(\ell)}_{z_{i_1},\dots,z_{i_\ell}}F_i$, for some $\{z_{i_1},\dots,z_{i_\ell}\}\subset \{z_1,\dots,z_M\}$ and some $k_i<\ell\leq M$.
\end{proof}

\begin{remark}\label{r:restricted} {\rm 
Reasoning as in the previous proof, one sees that, for $F_1,\dots,F_m$ as in the statement of Lemma \ref{l:order} and for every $q\geq  1$, the sum in \eqref{e:dpprod} can be taken to be over the smaller set 
 ${\bf W}(q \, ; \, k_1,\dots,k_m)$, defined as the collection of all words $(A_1,\dots,A_q)$ in the alphabet $\{A\subseteq [m] \mid A\neq\emptyset \}$ such that, for all $i=1,\dots,m$,
 \begin{equation}\label{e:restricted}
d_i= d_i(A_1,..,A_q) := \Big|\{ \ell \in [q] : i\in A_\ell\}\Big| \leq k_i.  
\end{equation}
Note that, consistently with Lemma \ref{l:order}, one has that ${\bf W}(q \, ; \, k_1,\dots,k_m) = \emptyset$ if $q> k_1+\cdots+k_m$.
}
\end{remark}

\medskip

The next section contains some further combinatorial notions, that are useful to deal with the situation in which each $F_i$ in Lemma \ref{l:order} is an element of a single Wiener chaos.

\subsection{The language of partitions and contractions}

\subsubsection{Partitions, tensors and expectations} \label{ss:partitions} Fix $m\geq 1$ and a vector $(k_1,\dots,k_m)\in \mathbb{Z}_+^m:= \{1,2,\dots\}^m$, write $K := k_1+\cdots +k_m$, and adopt the notation and conventions introduced in Section \ref{ss:existintro}. We write $\Pi_{\geq 2} (k_1,\dots,k_m):= \{ \sigma \in \Pi(k_1,\dots,k_m) : |b|\geq 2, \forall b\in \sigma\}$, that is: $\Pi_{\geq 2}(k_1,\dots,k_m)$ is the subset of $\Pi(k_1,\dots,k_m)$ composed of partitions with blocks at least of size 2. Given a partition $\sigma$ we denote by $|\sigma|$ the {\bf size} (that is, the number of blocks) of $\sigma$. In what follows, we will sometimes need to extend the definitions of $\Pi(k_1,\dots,k_m)$ and $\Pi_{\geq 2}(k_1,\dots,k_m)$ to the case in which some of the integers $k_i$ are equal to zero. To this end, given a vector of nonnegative integers $(k_1,\dots,k_m)\in \mathbb{N}_0^m:= \{0,1,...\}^m$, we set $\Pi(k_1,\dots,k_m)=\Pi_{\geq 2}(k_1,\dots,k_m):= \emptyset$ if $k_1=\cdots =k_m = 0$ and otherwise we define  $\Pi(k_1,\dots,k_m):=\Pi(k'_1,\dots,k'_\ell)$ and $\Pi_{\geq 2}(k_1,\dots,k_m):=\Pi_{\geq 2}(k'_1,\dots,k'_\ell)$, where $(k'_1,\dots,k'_\ell)$ ($\ell\leq m$) is the subvector of $(k_1,\dots,k_m)$ composed of strictly positive integers. Note that the elements of $\Pi(k_1,\dots,k_m)$ and $\Pi_{\geq 2}(k_1,\dots,k_m)$ are {\bf partitions} of $[K]$ where \begin{equation} \label{e:K}
K := k'_1+\cdots+ k'_\ell = k_1+\cdots + k_m
\end{equation}
as before.

\medskip

\noindent{\bf Notation.} Given a vector of nonnegative integers $(k_1,\dots,k_m)\in \mathbb{N}_0^m$, consider symmetric kernels $f_i\in L^2_s (\mu^{k_i})$, $i=1,\dots,m$, and observe that, if $k_i = 0$, then $f_i$ is a real constant. As before, we define the {\bf tensor product} $f_1\otimes \cdots \otimes f_m$ to be the mapping from $\mathcal{Z}^K$ into $\mathbb{R}$ (where $K$ is defined in \eqref{e:K}) given by
\begin{equation}\label{e:tensor}
{f_1\otimes \cdots \otimes f_m (v_1,\dots,v_K) :=  \prod_{i=1}^m f_i(v_{k_1+\cdots+ k_{i-1}+1},\dots, v_{k_1+\cdots +k_i}), \quad (v_1,\dots,v_K) \in \mathcal{Z}^K,}
\end{equation}
where, on the right-hand side of the previous equation, the factor corresponding to $i=1$ is equal to {\color{blue} $f_1(v_1,\dots,v_{k_1})$} by convention and, for $i=1,\dots,m$ 
$$
{f_i(v_{k_1+\cdots+ k_{i-1}+1},\dots, v_{k_1+\cdots +k_i}) \equiv f_i \in \mathbb{R},}
$$
if $k_i = 0$. As in \cite[p 116]{LPbook}, given $\sigma \in \Pi(k_1,\dots,k_m)$ we use the symbol $(f_1\otimes \cdots \otimes f_m)_\sigma$ to indicate the real-valued mapping on $\mathcal{Z}^{|\sigma|}$ obtained from $f_1\otimes \cdots \otimes f_m $ by identifying two variables in the argument of $f_1\otimes \cdots \otimes f_m $ if and only if they belong to the same block of $\sigma$. For instance:
\begin{itemize}
    \item[---] if $m=2$, $k_1 = 1$, $k_2 = 2$ and $\sigma = \{\{1,2\} , \{3\}\}$, then $(f_1\otimes f_2)_\sigma (z_1,z_2) = f_1(z_1) f_2(z_1, z_2)$;
    \item[---] if $m=3$, $k_1=k_2=k_3 = 2$ and $\sigma = \{\{1,3,5\}, \{2\}, \{4,6\}\}$, then $(f_1\otimes f_2\otimes f_3)_\sigma (z_1,z_2,z_3) = f_1(z_1, z_2) f_2(z_1, z_3) f_3(z_1,z_3)$;
    \item[---] if $m=3$, $k_1=k_3=2$ and $k_2 = 0$ (so that $(k'_1, k'_2) = (k_1, k_3) = (2,2)$) and $\sigma = \{\{1,3\}, \{2,4\}\}$, then $(f_1\otimes f_2\otimes f_3)_\sigma (z_1,z_2) = f_1(z_1, z_2)\cdot f_2 \cdot  f_3(z_1,z_2)$, where $f_2$ is a real constant.
\end{itemize}
Note that $|(f_1\otimes \cdots \otimes f_m)_\sigma| =(|f_1 |\otimes \cdots \otimes | f_m| )_\sigma$, by definition. Also, when $k_1,...,k_m\geq 1$ and $\sigma\in \Pi_{\geq 2}(k_1,...,k_m)$, one can easily relate the function $(f_1\otimes \cdots \otimes f_m)_\sigma$ with the notation introduced in Section \ref{ss:existintro} through the identity
$$
\int_{Z^{|\sigma|}}(f_1\otimes \cdots \otimes f_m)_\sigma d \mu^{|\sigma|}= H(\sigma, \emptyset; f_1,...,f_m).
$$
The notation adopted in the present section is meant to facilitate the connection with references \cite{LPbook, SchulteThaele12, SchulteThale2024}.

\medskip

\begin{definition}\label{d:ca} {\rm Consider $(k_1,\dots,k_m)\in \mathbb{N}^m$, as well as symmetric kernels $f_i\in L^2_s (\mu^{k_i})$, $i=1,\dots,m$. We say that the kernels $f_1,\dots, f_m$ verify {\bf Condition A} if either (a) $k_1=\cdots = k_m = 0$, or (b) $f_i\in L^1(\mu^{k_i})$ for all $i = 1,\dots,m$ and, for every $\sigma \in \Pi(k_1,\dots,k_m)$, one has that
\begin{equation}\label{e:intsigma}
\int_{\mathcal{Z}^{|\sigma|}} \Big| (f_1\otimes \cdots \otimes f_m)_\sigma \Big| d\mu^{|\sigma |} <\infty.
\end{equation}
}  
\end{definition}

The following result provides sufficient conditions for a product of multiple Wiener-It\^o integrals to be in $L^1(\mathbb{P})$ and also yields an explicit (combinatorial) expression for its expectation. A complete proof is given in \cite[Theorem 12.7]{LPbook}.

\smallskip 

\begin{theorem}\label{t:diagram} Consider a vector $(k_1,\dots,k_m)\in \mathbb{N}^m$, as well as symmetric kernels $f_i\in L^2_s (\mu^{k_i})$, $i=1,\dots,m$. If the kernels $f_1,\dots,f_m$ satisfy {\rm \bf Condition A} (see Definition \ref{d:ca}), then $\Phi := \prod_{i=1}^m I_{k_i}(f_i)\in L^1(\mathbb{P})$, and moreover
\begin{equation}\label{e:diagrams}
\mathbb{E}[\Phi] = \sum_{\sigma\in \Pi_{\geq 2} (k_1,\dots,k_m) } \int_{\mathcal{Z}^{|\sigma|}}  (f_1\otimes \cdots \otimes f_m)_\sigma \,  d\mu^{|\sigma |},
\end{equation}
where the right-hand side of equation \eqref{e:diagrams} equals by definition the product $f_1\cdots f_m \in \mathbb{R} $ in case $k_1=\cdots = k_m = 0$.
\end{theorem}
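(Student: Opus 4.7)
The plan is a two-step argument: first establish identity \eqref{e:diagrams} directly on a dense class of \emph{elementary} kernels, and then extend to the general case via a dominated-convergence procedure justified by Condition A.

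For the first step, I would restrict to kernels of the form
\begin{equation*}
f_i = \mathrm{sym}\Bigl(\sum_j c_{i,j}\, \mathbf{1}_{A_{j,1}\times\cdots\times A_{j,k_i}}\Bigr),
\end{equation*}
where all $A_{j,\ell}$ are taken from a fixed finite family of pairwise disjoint elements of $\mathscr{Z}_\mu$. For such kernels, $I_{k_i}(f_i)$ is a finite linear combination of products $\hat\eta(A_{j,1})\cdots\hat\eta(A_{j,k_i})$, and hence has moments of every order, so the same holds for $\Phi$. Expanding $\Phi$, every summand is indexed by an assignment of the $K = k_1+\cdots+k_m$ coordinate ``slots'' (grouped according to $\pi^\star$ as in \eqref{e:blocks}) to members of the base family, and the expectation of each summand factorizes over the coincidence pattern of these slots. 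Using $\mathbb{E}[\hat\eta(A)]=0$ together with independence of $\hat\eta$ on disjoint sets, the only surviving contributions arise from coincidence patterns equal to some $\sigma\in\Pi_{\geq 2}(k_1,\dots,k_m)$ — the constraint $|b\cap b^\star|\le 1$ for every $b\in\sigma$ and $b^\star\in\pi^\star$ is automatic because the slots within a single factor $I_{k_i}(f_i)$ range over pairwise disjoint base sets. Re-packaging the resulting discrete sums as integrals yields precisely \eqref{e:diagrams} in the elementary case.

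For the second step, given $f_1,\dots,f_m$ satisfying Condition A, I would choose elementary approximants $f_i^{(n)}$ with $|f_i^{(n)}|\le |f_i|$ and $f_i^{(n)}\to f_i$ both pointwise $\mu^{k_i}$-a.e.\ and in $L^2(\mu^{k_i})\cap L^1(\mu^{k_i})$; such a sequence exists by $\sigma$-finiteness of $\mu$ and standard truncation. Dominated convergence on each $\mathcal{Z}^{|\sigma|}$, with the integrable envelope $(|f_1|\otimes\cdots\otimes|f_m|)_\sigma$ supplied by \eqref{e:intsigma}, transfers the elementary identity to the right-hand side of \eqref{e:diagrams}. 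On the left, the $L^2$-convergence $I_{k_i}(f_i^{(n)})\to I_{k_i}(f_i)$ gives $\Phi^{(n)}\to\Phi$ in probability; applying the elementary formula to the moduli $|f_i^{(n)}|\le|f_i|$ yields a bound on $\mathbb{E}|\Phi^{(n)}|$ that is uniform in $n$, hence uniform integrability of $(\Phi^{(n)})_n$, from which $\Phi\in L^1(\mathbb{P})$ and $\mathbb{E}[\Phi^{(n)}]\to\mathbb{E}[\Phi]$.

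The main obstacle is the approximation step: one needs the elementary sequence $(f_i^{(n)})$ to control simultaneously (a) the $L^2$-norm of each factor, (b) the $L^1$-norm on $\mathcal{Z}^{|\sigma|}$ of every contracted tensor $(f_1^{(n)}\otimes\cdots\otimes f_m^{(n)})_\sigma$ for each $\sigma\in\Pi_{\geq 2}(k_1,\dots,k_m)$, and (c) uniform integrability of the resulting products. Condition A is precisely the hypothesis providing the integrable majorants required by (b) and, through the elementary version of \eqref{e:diagrams} already proven, also (c); building $(f_i^{(n)})$ itself is then reduced to exhausting $\mathcal{Z}^{k_i}$ by nested rectangles of finite measure and truncating values at level $n$, which is standard given $\sigma$-finiteness of $\mu$.
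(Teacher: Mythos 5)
The paper itself does not prove this theorem: it defers to \cite[Theorem 12.7]{LPbook}, whose argument represents each $I_{k_i}(f_i)$, for $f_i\in L^1(\mu^{k_i})\cap L^2(\mu^{k_i})$, pathwise as a binomial sum of integrals against factorial measures of $\eta$ and powers of $\mu$, expands the product, and controls/computes every term with the multivariate Mecke formula; {\bf Condition A} with \emph{all} $\sigma\in\Pi(k_1,\dots,k_m)$ (singletons included) is precisely what makes each term absolutely integrable, and the compensation produces the cancellations leaving only $\Pi_{\geq 2}$. Your route (elementary kernels plus approximation) is genuinely different, and the elementary step is sound in spirit --- it is the moment--cumulant computation, using that all cumulants of $\hat\eta(A)$ of order $\geq 2$ equal $\mu(A)$ --- though two points are glossed: the "coincidence pattern" of base sets is \emph{not} the partition $\sigma$ (centred Poisson moments of order $\geq 4$ are not $\mu(A)$; one must expand moments into cumulants over partitions), and off-diagonal elementary kernels are dense in $L^2_s(\mu^{k})$ only for diffuse $\mu$, so atoms require an extra marking/randomization argument in the stated generality.

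The genuine gap is in the limit passage. First, your uniform bound on $\mathbb{E}|\Phi^{(n)}|$ is obtained by "applying the elementary formula to the moduli $|f_i^{(n)}|$", but that formula computes $\mathbb{E}\bigl[\prod_i I_{k_i}(|f_i^{(n)}|)\bigr]$, and there is no inequality $|I_k(f)|\leq I_k(|f|)$ because $\hat\eta$ is a signed measure: already for $m=1$ one has $\mathbb{E}[I_1(|f|)]=0$ while $\mathbb{E}|I_1(f)|>0$ in general, so the claimed bound fails outright. Obtaining a genuine bound on $\mathbb{E}|\Phi^{(n)}|$ (and, more importantly, an integrable majorant) forces the decomposition $\hat\eta=\eta-\mu$ and the factorial-measure/Mecke computation, i.e.\ essentially the Last--Penrose proof, and this is where the singleton partitions in {\bf Condition A} enter. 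Second, even granting a uniform bound on $\mathbb{E}|\Phi^{(n)}|$, boundedness in $L^1(\P)$ does \emph{not} imply uniform integrability; convergence in probability plus an $L^1$ bound only yields $\mathbb{E}|\Phi|\leq\liminf_n\mathbb{E}|\Phi^{(n)}|$ via Fatou, not $\mathbb{E}[\Phi^{(n)}]\to\mathbb{E}[\Phi]$. Since the whole difficulty in the Poisson setting is the absence of hypercontractivity (products of multiple integrals need not even be integrable), this interchange of limit and expectation is the analytic heart of the theorem and cannot be waved through; repairing it requires a fixed integrable dominating random variable for the sequence $(\Phi^{(n)})$, which again leads back to the factorial-measure expansion under {\bf Condition A}.
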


\medskip 

\begin{remark}\label{r:diagrams}{\rm \begin{enumerate}
    \item Assume that the symmetric kernels $f_1,\dots,f_m$ satisfy {\bf Condition~A}, but are not necessarily elements of $L^2(\mu^{k_i})$ ($i=1,\dots,m$). It is well-known that, in this case, one can still define the multiple integrals $I_{k_i}(f_i)$ using the analytical definition for integrable kernels given e.g.~in \cite[eq. (12.12)]{LPbook}, and that the conclusion of Theorem \ref{t:diagram} remains valid --- see \cite[Proposition 12.6 and Thereom 12.7]{LPbook} for a discussion of this point. We also recall that, if for some $k\geq 1$ $f\in L^2(\mu^k) \cap L^1(\mu^k)$, then the definition of the multiple integral $I_k(f)$ adopted in this paper and that of \cite[eq. (12.12)]{LPbook} coincides with probability one; see \cite[Proposition 12.9]{LPbook}.
    \item {\bf Condition A} is {\it not} necessary for the conclusion of Theorem \ref{t:diagram} to hold. Consider for instance the case $m=2$, as well as two kernels $f_i \in L^2_s(\mu^{k_i})$, $i=1,2$ (not necessarily satisfying {\bf Condition A}): then, by Cauchy-Schwarz one has always that $\Phi := I_{k_1}(f_1)I_{k_2}(f_2)\in L^1(\mathbb{P})$ and the usual isometry formula $\mathbb{E}[\Phi] = {\bf 1}_{k_1=k_2} \, k_1! \langle f_1, f_2\rangle_{L^2(\mu^{k_1})}$ holds (it is a standard exercise to show that such an isometric relation is equivalent to \eqref{e:diagrams} in this case). 
    \item A special case of Theorem \ref{t:diagram} is stated in \cite[Corollary 7.4.1]{PTbook}.
\end{enumerate}}
    \end{remark}

\subsubsection{Local conditions} Fix $m\geq 2$, and consider $(k_1,\dots, k_m)\in \mathbb{N}_0^m $, as well as kernels $f_i\in L^2(\mu^{k_i})$, $i=1,\dots,m$. For every $q = 1,\dots, K := k_1+\cdots + k_m$, we define the class ${\bf W}(q \, ; \, k_1,\dots,k_m)$ according to Remark \ref{r:restricted}, and adopt the notation \eqref{e:restricted}. Given $W =(A_1,\dots,A_q)\in {\bf W}(q \, ; \, k_1,\dots,k_m)$, the mapping $(z_1,\dots,z_q) \mapsto Q(D^{[W]}_{z_1,\dots,z_q}(I_{k_1}(f_1),\dots, I_{k_m}(f_m))$, defined according to the conventions of Section \ref{ss:combone}, admits the following representation (which is a direct consequence of \eqref{derint}): for $\mu^q$-almost every $(z_1,\dots,z_q)$,
\begin{equation}\label{e:prediagram}
Q(D^{[W]}_{z_1,\dots,z_q}(I_{k_1}(f_1),\dots, I_{k_m}(f_m)) = \prod_{i=1}^m (k_i)_{(d_i)} \cdot I_{k_i - d_i} (f_i( z_{{\bf q}(i)}, \cdot)),
    \end{equation}
where we have adopted the following conventions: (a) the symbol $(k_i)_{(d_i )}$ denotes the usual falling factorial; (b) each ${\bf q}(i)$, $ i=1,\dots,m$, is defined as the (possibly empty) set $\{j_1^{(i)},\dots,j_{d_i}^{(i)}\}$ of those $j \in [q]$ such that $i\in A_j$ (in such a way that $|{\bf q}(i) | = d_i $); (c) for $i=1,\dots,m$, the kernel $f_i( z_{{\bf q}(i)}, \cdot)$ is the element of $L^2_s(\mu^{k_i-d_i})$ defined by the mapping
$$
(a_1,\dots,a_{k_i-d_i} )\mapsto f_i \left(z_{j^{(i)}_1},\dots,z_{j^{(i)}_{d_i}}, a_1,\dots,a_{k_i-d_i}\right),
$$
that is, $f_i( z_{{\bf q}(i)}, \cdot)$ is obtained from $f_i$ by fixing the first $d_i$ elements of its argument to be equal to those entries $z_j$ of the vector $(z_1,\dots, z_q)$ such that $i\in A_j$. We stress that the kernels $f_i$ are symmetric, and that the above definitions are therefore robust with respect to arbitrary permutations of the arguments of the kernels $f_i$.
\medskip

\begin{example}\label{ex:wick}
{\rm 
\begin{enumerate}
\item Consider the case $m=3$, $k_1=k_2=k_3 = 2$, $q=2$ and $W =(A_1,A_2) = ([3], \{1,3\})$. Then,
$$
Q(D^{[W]}_{z_1,z_2}(I_{2}(f_1),I_{2}(f_2), I_{2}(f_3))= 8 f_1(z_1,z_2)f_3(z_1,z_2) I_1(f_2(z_1, \cdot)).
$$
\item Writing $K = k_1+\cdots + k_m$, one has that ${\bf W}(K \, ; \, k_1,\dots,k_m)$ coincides with the collection of those words $(A_1,\dots,A_K)$ such that $d_i = k_i$ for each $i=1,\dots,m$. Defining ${\bf U}(k_1,\dots,k_m)$ to be the set of all vectors of the form $(B_1,\dots,B_m)$ such that the sets $B_i\subset [K]$ are pairwise disjoint, $|B_i| = k_i$ and $\cup_i B_i = [K]$, one sees that there exists a bijection $\varphi$ from ${\bf W}(K \, ; \, k_1,\dots,k_m)$ onto ${\bf U}(k_1,\dots,k_m)$ with the following property: if $(B_1,\dots,B_m) = \varphi(W)$, then 
$$
Q(D^{[W]}_{z_1,\dots,z_K}(I_{k_1}(f_1),\dots, I_{k_m}(f_m)) = \prod_{i=1}^m k_i!\cdot f_i({\bf z}_{B_i}),
$$
where ${\bf z}_{B_i}$ is the vector composed of those $z_k$ such that $k\in B_i$. This yields in particular that 
\begin{eqnarray}
\notag && \frac{1}{K!}\sum_{|W| = K } Q(D^{[W]}_{z_1,\dots,z_K}(I_{k_1}(f_1),\dots, I_{k_m}(f_m)) \\ \notag &&= \frac{1}{K!}\sum_{W\in {\bf W}(K \, ; \, k_1,\dots,k_m) } Q(D^{[W]}_{z_1,\dots,z_K}(I_{k_1}(f_1),\dots, I_{k_m}(f_m))\\
&& = {\rm sym} (f_1\otimes \cdots \otimes f_m)(z_1,\dots,z_K). \label{e:symtens}
\end{eqnarray}

\end{enumerate}
}
\end{example}

\medskip

\begin{definition}\label{d:aloc}{\rm Fix $m\geq 2$, consider integers $k_1,\dots, k_m\geq 1$ and kernels $f_i\in L^2(\mu^{k_i})$, $i=1,\dots,m$, and let the above notation and terminology prevail (in particular, we write $K := k_1+\cdots +k_m$). We say that the kernels $f_1,\dots,f_m$ verify {\bf Condition A-(loc)} if, for every $q=1,\dots, K-1$, for every word $W =(A_1,\dots,A_q)\in {\bf W}(q \, ; \, k_1,\dots,k_m)$, and for $\mu^q$-almost every $(z_1,\dots,z_q)$, one has that the kernels $f_i( z_{{\bf q}(i)}, \cdot)$ verify {\bf Condition A}, as formalized in Definition \ref{d:ca}.
}
\end{definition}

\smallskip
\begin{remark}\label{r:alocc}{\rm  As anticipated in the Introduction, it is a standard exercise to verify that {\bf Condition {A}-(loc)} is equivalent to \eqref{e:ziocan} and \eqref{e:weakassintro}.
    }
\end{remark}

\smallskip
\begin{remark}{\rm 
    It is tedious but straightforward to show that {\bf Condition~A} implies {\bf Condition~A-(loc)} by Fubini's theorem, and that the reciprocal implication is false in general. To see this last point, consider the case $\mathcal{Z} = (1, +\infty) $, $\mu(dz) = z^{-5/2}$, $m=3$, $k_1=k_2=k_3 = 1$ and {$f_1(v)= f_2(v) = f_3(v) = v^{1/2}$}. We have that $\sigma=\{\{1,2,3\}\} \in  \Pi(1,1,1) $ and $\int_{\mathcal{Z}} |(f_1 \otimes f_2\otimes f_3)_{\sigma}| d\mu =\infty$, whereas {\bf Condition~A-(loc)} holds. }
\end{remark}

\medskip

The following useful statement is obtained by combining  \eqref{e:dpprod}, \eqref{e:diagrams} and \eqref{e:prediagram}.

\medskip

\begin{proposition}\label{prop: OneCostCondiA} Consider $(k_1,\dots,k_m)\in \mathbb{Z}_+^m$, as well as symmetric kernels $f_i\in L^2_s (\mu^{k_i})$, $i=1,\dots,m$. Assume that the kernels $f_i, \,i=1,\dots,m,$ verify {\bf Condition A-(loc)} and write $\Phi = \prod_{i=1}^m I_{k_i}(f_i)$. Then, for every $q=1,\dots,K=k_1+\cdots +k_m$, and for $\mu^q$-almost every $(z_1,\dots,z_q)\in \mathcal{Z}^q$ one has that
\begin{eqnarray}\label{e:kernels}
&&\frac{1}{q!} \mathbb{E}\left[D^{(q)}_{z_1,\dots,z_q} \Phi\right] \\ \label{e:zonta}
&&=\frac{1}{q!}  \sum_{W\in {\bf W}(q\, ;\, k_1,\dots,k_m)} \prod_{i=1}^m (k_i)_{(d_i)} \times \\ \notag
&& \quad \quad \times\!\! \sum_{\sigma\in\Pi_{\geq 2}(k_1-d_1,\dots,k_m-d_m)} \int_{\mathcal{Z}^{|\sigma|}} \left(f_1( z_{{\bf q}(1)}, \cdot)\otimes \cdots \otimes f_m( z_{{\bf q}(m)}\, , \cdot)\right)_\sigma\, d\mu^{|\sigma|}\\
&&  = h_q(z_1,..,z_q),\label{e:zappa}
\end{eqnarray}
where we have adopted the same notational conventions as in formula \eqref{e:prediagram}, and $h_q$ is the kernel appearing in \eqref{e:hq} ($q=1,...,K$).    
\end{proposition}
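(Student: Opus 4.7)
My plan is to chain together the three indicated results, namely \eqref{e:dpprod}, \eqref{e:prediagram} and \eqref{e:diagrams}, and then match the resulting expression against the definition of $h_q$ in \eqref{e:hq} by a combinatorial bijection.

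First, I would apply \eqref{e:dpprod} to $\Phi = \prod_{i=1}^m I_{k_i}(f_i)$, restricted per Remark \ref{r:restricted} to the set $\mathbf{W}(q\, ;\, k_1,\dots,k_m)$: since $D^{(\ell)} I_{k_i}(f_i) = 0$ for $\ell > k_i$ by \eqref{derint}, all terms corresponding to words $W$ with some $d_i(W)>k_i$ vanish $\mu^q$-a.e. For each remaining word $W=(A_1,\dots,A_q)$, I would then invoke \eqref{e:prediagram} to rewrite the corresponding summand as the product of multiple integrals
\[
Q(D^{[W]}_{z_1,\dots,z_q}(I_{k_1}(f_1),\dots,I_{k_m}(f_m))) = \prod_{i=1}^m (k_i)_{(d_i)}\, I_{k_i - d_i}(f_i(z_{\mathbf{q}(i)},\cdot)).
\]

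Second, I would take expectations term by term. Here Condition A-(loc) enters decisively: by Definition \ref{d:aloc}, for $\mu^q$-a.e. $(z_1,\dots,z_q)$, the partially evaluated kernels $f_i(z_{\mathbf{q}(i)},\cdot) \in L^2_s(\mu^{k_i-d_i})$ satisfy Condition A, so Theorem~\ref{t:diagram} applies to each product $\prod_{i=1}^m I_{k_i-d_i}(f_i(z_{\mathbf{q}(i)},\cdot))$, ensuring $L^1$-integrability and delivering the diagram sum over $\sigma\in\Pi_{\geq 2}(k_1-d_1,\dots,k_m-d_m)$. Dividing by $q!$ and exchanging the finite sum with the expectation yields the explicit expression \eqref{e:zonta}.

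Third, I would verify the equality \eqref{e:zonta} $=$ $h_q(z_1,\dots,z_q)$, with $h_q$ as in \eqref{e:hq}. The key is a bijective correspondence between the data $(W,\sigma,\, \text{ordered placement of the } z_j\text{'s inside each } f_i)$ appearing in \eqref{e:zonta} and pairs $(\tilde\sigma, A)$ with $\tilde\sigma \in \Pi(k_1,\dots,k_m)$, $A\subseteq \tilde\sigma_{\geq 2}$, $|A|+|\tilde\sigma_1|=q$, together with an ordered labeling of $A\cup\tilde\sigma_1$ by $\{z_1,\dots,z_q\}$. Singletons of $\tilde\sigma$ correspond to indices $j$ with $|A_j|=1$; the blocks of $A\subseteq\tilde\sigma_{\geq 2}$ correspond to indices $j$ with $|A_j|\geq 2$ (each recording which positions among the $K$ tensor arguments get identified with $z_j$); and the blocks of $\tilde\sigma_{\geq 2}\setminus A$ correspond to the blocks of $\sigma$, integrated out. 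The product of falling factorials $\prod_{i=1}^m (k_i)_{(d_i)}$ exactly counts the ordered ways of selecting, inside each $f_i$, the $d_i$ positions to be identified with the corresponding $z_j$'s; combining this with the prefactor $1/q!$ and the symmetry of the $f_i$'s reproduces the symmetrization \eqref{e:sym} present in the definition of $H(\tilde\sigma, A; f_1,\dots,f_m)$, as well as the sum over $(\tilde\sigma,A)$ in \eqref{e:hq}.

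\textbf{Main obstacle.} The principal difficulty lies in the bookkeeping of Step~3: aligning the factorials and the two indexing schemes requires care, since the two formulations index the ``identification pattern'' differently (by words versus by partitions-with-distinguished-subset). However, the structure of the bijection is analogous to the classical diagram-based derivations of product formulae on Wiener and Poisson chaoses (cf. \cite[Chapter 12]{LPbook} and \cite{DoblerPeccati18} for the case $m=2$), and the verification reduces to matching, for each fixed $q$, the terms contributing the same ordered tuple $(\mathbf{z}_1,\dots,\mathbf{z}_m)$ of subvectors of $(z_1,\dots,z_q)$ in the arguments of $f_1,\dots,f_m$. In particular, the boundary case $q=K$ recovers \eqref{e:symtens} and yields $h_K = \mathrm{sym}(f_1\otimes\cdots\otimes f_m)$, consistent with Example \ref{ex:wick}(2).
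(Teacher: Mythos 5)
Your proposal is correct and takes essentially the same route as the paper: \eqref{e:dpprod} and \eqref{e:prediagram} reduce $\E\bigl[D^{(q)}_{z_1,\dots,z_q}\Phi\bigr]$ to expectations of products of multiple integrals, {\bf Condition A-(loc)} makes Theorem \ref{t:diagram} applicable to each of them (yielding \eqref{e:zonta}), and the equality with $h_q$ is obtained by a combinatorial re-indexing. The paper implements your Step 3 by parametrizing both sums by pairs $(T,\varrho)$, with $T$ an ordered $q$-tuple of disjoint subsets of $[K]$ meeting each block of $\pi^\star$ at most once and $\varrho$ a non-flat partition (blocks of size at least two) of the remaining indices, which is precisely the bijection you sketch.
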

\begin{proof} We only need to prove identity \eqref{e:zappa}, that is, we have to show that, for a fixed $q=1,..,K-1$, the double sum appearing in \eqref{e:zonta} (without the prefactor $1/q!$) equals the sum on the right-hand side of \eqref{e:hq} (without the prefactor $1/q!$); the two sums are denoted by $\mathbb{S}_q(1)$ and $\mathbb{S}_q(2)$, respectively, in this proof. To prove the desired identity, we consider the partition $\pi^\star$ defined in \eqref{e:blocks}, whose blocks are denoted by $b^\star_1, ...,b^\star_m$, and introduce some ad-hoc notation. We write $\Theta(k_1,..,k_m)$ to denote the collection of ordered $q$-plets $T =(T_1,...,T_q)$ of {\it disjoint} subsets of $[K]$ ($K=k_1+\cdots + k_m$) such that each $T_\ell$, $\ell = 1,...,q$, contains at most one element of each block of $\pi^*$. For every $T= (T_1,...,T_q)\in \Theta(k_1,..,k_m)$ and every $i=1,...,m$, we write
$$
b_i^\star(T) := b_i^\star \, \backslash \, \cup_{\ell=1}^q T_\ell,
$$
that is, $b_i^\star(T)$ is the collection of those elements of $b_i^\star$ that are not contained in any coordinate of $T$; write $B^\star(T):= \cup_{i=1}^m b^\star_i(T)\subseteq [K]$. Finally, given $T\in \Theta(k_1,..,k_m)$ we use the symbol $\Pi_{\geq 2}(B(T))$ to indicate the set of all partitions $\varrho$ of $B^\star(T)$ such that every block of $\varrho$ has at least two elements, and every block of $\varrho$ has at most one element in common with each set $b_i^\star(T)$, $i=1,...,m$. Given $T\in \Theta(k_1,..,k_m)$ and $\varrho = (r_1,...,r_s) \in \Pi_{\geq 2}(B(T))$ (the $r_i$'s are the blocks of $\varrho$), we define the function 
$$
(z_1,...,z_q)\mapsto (f_1\otimes\cdots \otimes f_m)_{T,\varrho}(z_1,...z_q)
$$
as follows:
\begin{itemize} 
\item[--] Consider the (tensor product) function $f_1\otimes\cdots \otimes f_m$, as defined in \eqref{e:tensor}, and use an arbitrary vector {$(v_1,...,v_K)$} as its argument;
\item[--] For $\ell=1,...,q$, replace every variable {$v_j$} such that $j\in T_\ell$ with the variable $z_\ell$;
\item[--] For every $k=1,...,s$, replace every coordinate {$v_i$} such that $i\in r_k$ ($r_k$ is the $k$th block of $\varrho$) with a common variable $u_k$;
\item[--] Integrate the vector $(u_1,...,u_s)$ with respect to the product measure $\mu^s$ on $\mathcal{Z}^s$.
\end{itemize}
Then, a direct inspection shows that, with the notation introduced at the beginning of the present proof,
$$
\mathbb{S}_q(1)  = \sum_{\substack{T\in \Theta(k_1,..,k_m)\\ \varrho\in  \Pi_{\geq 2}(B(T)) }}(f_1\otimes\cdots \otimes f_m)_{T,\varrho}\, (z_1,...z_q) = \mathbb{S}_q(2),
$$
for $d\mu^q$-a.e. $(z_1,...,z_q)$. The proof is concluded.

\end{proof}

\medskip 

Plainly, in the case $q=K$, formula \eqref{e:kernels} coincides with \eqref{e:symtens}.

\medskip 

The final section of the paper is devoted to the proof of a new $p$-integrabilty criterion for products of random variables having a finite chaos expansion. This is the missing item to conclude the proof of Theorem \ref{t:mainintro}.

\section{A general criterion (and proof of Theorem \ref{t:mainintro})}\label{sec:MainResu}

\subsection{A general statement}

The following result yields necessary and sufficient conditions, implying that the product of random variables living in a finite sum of Wiener chaoses. It is one of the main achievements of the paper.

\medskip 
    
\begin{theorem}\label{t:geo}Fix $m\geq 1$ and consider integers $k_1,\dots,k_m\geq 1$. Let $F_1,\dots,F_m \in L^2(\Prob)$ be such that
$$
F_i \in \bigoplus_{q = 0}^{k_i} C_q, \quad i=1,\dots,m,
$$
and set $K := k_1+\cdots +k_m$ and $\Phi := \prod_{i=1}^m F_i$. For $p\in [1,2]$, consider the following conditions:
\begin{enumerate}
    \item[\bf (i-$p$)] $\Phi\in L^p(\Prob)$;
    \item[{\bf (ii-$p$)}] for all $q=1,\dots,K$, one has that
    $$
    \sum_{|W| = q } Q(D_{z_1,\dots,z_q}^{[W]}(F_1,\dots,F_m))\in L^1(\Prob), \quad \mbox{for $\mu^q$-a.e. $z_1,\dots,z_q$},
    $$
    and the mapping
    $$
(z_1,\dots,z_q)\mapsto \mathbb{E}\left[\sum_{|W| = q } Q(D_{z_1,\dots,z_q}^{[W]}(F_1,\dots,F_m))\right]
    $$
    is in $L^p(\mu^q)$.
\end{enumerate}
Then, one has that {\bf (ii-$p$)} implies {\bf (i-$p$)} for all $p\in [1,2)$, and also that {\bf (i-$2$)} and {\bf (ii-$2$)} are equivalent. Moreover, if either condition {\bf (i-$2$)} or {\bf (ii-$2$)} is satisfied, then the chaotic decomposition \eqref{chaosdec} of $\Phi$ is such that $h_q = 0$ for all $q>K$, and
\begin{equation}\label{e:finstroock}
h_q(z_1,\dots,z_q) = \frac{1}{q!} \mathbb{E}\left[\sum_{|W| = q } Q(D_{z_1,\dots,z_q}^{[W]}(F_1,\dots,F_m)) \right], \quad q=1,\dots,K.
\end{equation}
\end{theorem}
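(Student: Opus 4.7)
The strategy is to combine the $p$-Poincar\'e inequality machinery of Section \ref{sec:PoincInequaIntegrAssump}---in particular Proposition \ref{p:doblerpeccati}---with the Last-Penrose formula (Theorem \ref{t:lastpenrose}), exploiting the fact that the iterated add-one cost of $\Phi$ vanishes beyond a fixed order. Indeed, since $F_1,\dots,F_m\in L^2(\P)$, the product $\Phi$ is a.s.~finite, and Lemma \ref{l:order} guarantees that $D^{(M)}_{z_1,\dots,z_M}\Phi=0$ for $\mu^M$-a.e.~$(z_1,\dots,z_M)$ whenever $M>K$. In particular, taking $M=K+1$ shows that condition (A) of Proposition \ref{p:doblerpeccati} holds with the choice $M=K$.

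For the implication {\bf (ii-$p$)} $\Rightarrow$ {\bf (i-$p$)} (valid for every $p\in[1,2]$, and thus covering both the case $p\in[1,2)$ and the direction {\bf (ii-2)} $\Rightarrow$ {\bf (i-2)}), the key point is that by \eqref{e:dpprod} one has
\begin{equation*}
D^{(q)}_{z_1,\dots,z_q}\Phi=\sum_{|W|=q}Q(D^{[W]}_{z_1,\dots,z_q}(F_1,\dots,F_m)),
\end{equation*}
so that {\bf (ii-$p$)}, applied for $q=1,\dots,K$, is precisely condition (B) of Proposition \ref{p:doblerpeccati} in tandem with condition (A) just established. The proposition then yields $\Phi\in L^p(\P)$.

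For the reverse implication {\bf (i-2)} $\Rightarrow$ {\bf (ii-2)} and the explicit formula \eqref{e:finstroock}, assume $\Phi\in L^2(\P)$. The Last-Penrose formula (Theorem \ref{t:lastpenrose}) then guarantees that, for every $q\geq 1$ and $\mu^q$-a.e.~$(z_1,\dots,z_q)$, the random variable $D^{(q)}_{z_1,\dots,z_q}\Phi$ belongs to $L^1(\P)$ and the $q$th kernel in the chaos expansion of $\Phi$ equals $(q!)^{-1}\E[D^{(q)}_{z_1,\dots,z_q}\Phi]$; since this kernel is automatically in $L^2_s(\mu^q)$, {\bf (ii-2)} follows by invoking \eqref{e:dpprod} once more, and \eqref{e:finstroock} is a direct restatement. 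The vanishing of $h_q$ for $q>K$ is immediate from Lemma \ref{l:order} via \eqref{kerform}. The main conceptual difficulty---bypassed here by the novel Theorem \ref{t:ppi}---is that $\Phi$ is not a priori integrable for $m\geq 3$, so the classical route of first checking $\Phi\in L^1(\P)$ and then establishing square-integrability via expectations of iterated add-one costs is unavailable; Proposition \ref{p:doblerpeccati} is precisely the tool that circumvents this obstacle by operating directly at the level of a.s.~finite random variables.
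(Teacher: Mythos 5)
Your argument is correct and follows essentially the same route as the paper: the implication \textbf{(ii-$p$)} $\Rightarrow$ \textbf{(i-$p$)} via Lemma \ref{l:order}, formula \eqref{e:dpprod} and Proposition \ref{p:doblerpeccati}, and the converse \textbf{(i-2)} $\Rightarrow$ \textbf{(ii-2)} together with \eqref{e:finstroock} via the Last--Penrose formula (Theorem \ref{t:lastpenrose}) combined with \eqref{e:dpprod}. Your write-up simply makes explicit the details the paper leaves implicit, so no further comparison is needed.
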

\begin{proof} The implication {\bf (i-$2$)} $ \to $ {\bf (ii-$2$)} directly follows from Theorem \ref{t:lastpenrose} and \eqref{e:dpprod}. The implication {\bf (ii-$p$)} $ \to $ {\bf (i-$p$)} for all $p\in [1,2]$ is a consequence of Lemma \ref{l:order}, Proposition \ref{p:doblerpeccati} and, again, formula \eqref{e:dpprod} (which also yields the final assertion in the statement). 
\end{proof}

\begin{remark}{\rm For $p\in [1,2)$, the implication {\bf (i-$p$) $\to$ (ii-$p$)} is false in general, even in the case $m=2$ and $k_1 = k_2 = 1$. To see this, consider the case were $\mu(\mathcal{Z}) = +\infty$, and select kernels $f_1, f_2 \in L^2(\mu)$ such that $f_1, f_2$ have disjoint supports (so that $I_1(f_1)$ and $I_1(f_2)$ are independent) and $f_1\notin L^p(\mu)$ for $p<2$. Then $K=2$, $h_1$ in formula \eqref{e:finstroock} equals zero, and $h_2 = {\rm sym}(f_1\otimes f_2) \notin L^p(\mu^2)$ for all $p\in [1,2)$. On the other hand, one has that $\Phi = I_1(f_1)\cdot I_1(f_2)$ is in $L^2(\Prob)$, and consequently in $L^p(\Prob)$ for all $p\in [1,2)$.}
\end{remark}

\medskip 

The implication {\bf (i-$2$)} $ \to $ {\bf (ii-$2$)} provides necessary conditions for the square-integrability of the random variable $\Phi := \prod_{i=1}^m F_i$. When applied to vectors of multiple integrals, such a result can be combined with the content of Example \ref{ex:wick}-(2) to deduce the following statement.

\medskip 

\begin{corollary}\label{c:wick} Let $k_1,\dots,k_m\geq 1$ be integers, and let the kernels $f_i \in L^2(\mu^{k_i})$, $i=1,\dots,m$, be such that the product $\Phi := \prod_{i=1}^m I_{k_i}(f_i)$ is square-integrable. Then, writing $K=k_1+\cdots +k_m$, one has that $\Phi \in \sum_{q=0}^K C_q$ and the projection of $\Phi$ on $C_K$ coincides with the multiple integral
\begin{equation}\label{e:wick}
I_K(f_1\otimes \cdots \otimes f_m),
\end{equation}
where we have used the notation \eqref{e:tensor}.
\end{corollary}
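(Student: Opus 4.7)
The plan is to obtain this as a direct corollary of Theorem~\ref{t:geo}, combined with the computation in Example~\ref{ex:wick}-(2). Since each $F_i := I_{k_i}(f_i)$ lies in $C_{k_i} \subseteq \bigoplus_{q=0}^{k_i} C_q$, Theorem~\ref{t:geo} applies verbatim to $\Phi = \prod_{i=1}^m F_i$. The hypothesis $\Phi \in L^2(\mathbb{P})$ is exactly Condition \textbf{(i-$2$)}, so Theorem~\ref{t:geo} immediately yields that $\Phi$ has a finite Wiener--It\^o chaos expansion with $h_q = 0$ for $q > K$, establishing $\Phi \in \bigoplus_{q=0}^K C_q$.

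For the identification of the top-order kernel, I would invoke \eqref{e:finstroock} with $q = K$, which gives
\[
h_K(z_1,\dots,z_K) = \frac{1}{K!}\,\mathbb{E}\!\left[\sum_{|W|=K} Q\bigl(D^{[W]}_{z_1,\dots,z_K}(F_1,\dots,F_m)\bigr)\right].
\]
Here the key observation is that, by Remark~\ref{r:restricted}, words of length $K$ necessarily satisfy $d_i = k_i$ for each $i$, so every factor $I_{k_i - d_i}(f_i(z_{\mathbf{q}(i)},\cdot)) = I_0(f_i(\cdot)) = f_i(\cdot)$ is deterministic. This is precisely the content of Example~\ref{ex:wick}-(2), which already establishes the pointwise identity
\[
\frac{1}{K!}\sum_{|W|=K} Q\bigl(D^{[W]}_{z_1,\dots,z_K}(I_{k_1}(f_1),\dots,I_{k_m}(f_m))\bigr) = \mathrm{sym}(f_1 \otimes \cdots \otimes f_m)(z_1,\dots,z_K),
\]
with no expectation needed. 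Taking expectations on both sides (trivially, since the left-hand side is deterministic) therefore gives $h_K = \mathrm{sym}(f_1 \otimes \cdots \otimes f_m)$.

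Finally, to conclude that the projection of $\Phi$ onto $C_K$ equals $I_K(f_1 \otimes \cdots \otimes f_m)$ as stated in \eqref{e:wick} (without the explicit symmetrization), I would invoke the standard property (1) of multiple Wiener--It\^o integrals recalled in Section~\ref{sec:Prelimi}, namely $I_K(h) = I_K(\mathrm{sym}(h))$. This turns $I_K(\mathrm{sym}(f_1\otimes\cdots\otimes f_m))$ into $I_K(f_1\otimes\cdots\otimes f_m)$, completing the proof. There is no real obstacle: the entire argument is a packaging of Theorem~\ref{t:geo} and Example~\ref{ex:wick}-(2), and the only minor point to be careful about is verifying that the sum at $q = K$ is deterministic so that the expectation passes through trivially.
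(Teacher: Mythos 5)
Your proposal is correct and follows essentially the same route as the paper: the paper also deduces the corollary by applying the implication \textbf{(i-$2$)} $\to$ \textbf{(ii-$2$)} of Theorem~\ref{t:geo} (which gives $h_q=0$ for $q>K$ and formula \eqref{e:finstroock}) and then identifying the top kernel via the computation in Example~\ref{ex:wick}-(2), together with $I_K(h)=I_K(\mathrm{sym}(h))$. Your extra remarks (the reduction to words with $d_i=k_i$ and the fact that the sum at $q=K$ is deterministic) are exactly the content of Remark~\ref{r:restricted} and Example~\ref{ex:wick}-(2), so nothing is missing.
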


\medskip 

\begin{remark}\label{rem:AnswerProblem2}{\rm The fact that the multiple integral in \eqref{e:wick} is the projection of $\Phi$ on $C_K$ can be succinctly rewritten by using the language of {\bf Wick calculus} (see e.g. \cite[formula (1.6)]{surgailis84}), as follows:
\begin{equation}\label{e:simpletensorg}
: I_{k_1}(f_1)\cdots I_{k_m}(f_m) :\,\, = I_K(f_1\otimes\cdots \otimes f_m),
\end{equation}
where the left-hand side of the previous equation indicates a Wick product. The fact that the square-integrability of $\Phi = \prod_{i=1}^m I_{k_i}(f_i)$ implies that $\Phi \in \bigoplus_{p=0}^K C_p$ and that \eqref{e:simpletensorg} holds was conjectured in \cite[p. 222]{surgailis84} in the case $k_1=\cdots = k_m=1$ (and, to the best of our knowledge, never explicitly proved since).
}
\end{remark}

\subsection{End of the proof of Theorem \ref{t:mainintro}}

{\bf Part I} is a direct consequence of Theorem \ref{t:geo} in the case $p=2$, combined with Corollary \ref{c:wick}. {\bf Part II} follows directly from Proposition \ref{prop: OneCostCondiA}, that one has to use in synergy with Remark \ref{r:alocc}.

\subsection*{Acknowledgements} Research supported by the Luxembourg National Research Fund (Grant:
021/16236290/HDSA). The authors are grateful to Tara Trauthwein for useful discussions.

\bibliography{biblio}

\begin{thebibliography}{10}

\bibitem{AdamczakPivovarovSimanjuntak2024}
R.~Adamczak, P.~Pivovarov, and P.~Simanjuntak.
\newblock Limit theorems for the volumes of small codimensional random sections
  of {$\ell^n_p$}-balls.
\newblock {\em Ann. Probab.}, 52(1):93--126, 2024.

\bibitem{AkinwandeReitzner2020}
G.~Akinwande and M.~Reitzner.
\newblock Multivariate central limit theorems for random simplicial complexes.
\newblock {\em Adv. in Appl. Math.}, 121:102076, 27, 2020.

\bibitem{BachmannPeccati2016}
S.~Bachmann and G.~Peccati.
\newblock Concentration bounds for geometric {P}oisson functionals: logarithmic
  {S}obolev inequalities revisited.
\newblock {\em Electron. J. Probab.}, 21:Paper No. 6, 44, 2016.

\bibitem{BachmannReitzner2018}
S.~Bachmann and M.~Reitzner.
\newblock Concentration for {P}oisson {$U$}-statistics: subgraph counts in
  random geometric graphs.
\newblock {\em Stoch. Process. Appl.}, 128(10):3327--3352, 2018.

\bibitem{BaciBonnetThale2022}
A.~Baci, G.~Bonnet, and C.~Th\"ale.
\newblock Weak convergence of the intersection point process of {P}oisson
  hyperplanes.
\newblock {\em Ann. Inst. Henri Poincar\'e{} Probab. Stat.}, 58(2):1208--1227,
  2022.

\bibitem{BetkenHugThale2023}
C.~Betken, D.~Hug, and C.~Th\"ale.
\newblock Intersections of {P}oisson {$k$}-flats in constant curvature spaces.
\newblock {\em Stoch. Process. Appl.}, 165:96--129, 2023.

\bibitem{BPY}
C.~Bhattacharjee, G.~Peccati, and D.~Yogeshwaran.
\newblock Spectra of {Poisson} functionals and applications in continuum
  percolation.
\newblock Preprint, {arXiv}:2407.13502, 2024.

\bibitem{bobkovledoux}
S.~G. Bobkov and M.~Ledoux.
\newblock On modified logarithmic {Sobolev} inequalities for {Bernoulli} and
  {Poisson} measures.
\newblock {\em J. Funct. Anal.}, 156(2):347--365, 1998.

\bibitem{BourguinCampeseDang2024}
S.~Bourguin, S.~Campese, and T.~Dang.
\newblock Functional {G}aussian approximations on {H}ilbert-{P}oisson spaces.
\newblock {\em ALEA Lat. Am. J. Probab. Math. Stat.}, 21(1):517--553, 2024.

\bibitem{BourguinDurastanti2017}
S.~Bourguin and C.~Durastanti.
\newblock On high-frequency limits of {$U$}-statistics in {B}esov spaces over
  compact manifolds.
\newblock {\em Illinois J. Math.}, 61(1-2):97--125, 2017.

\bibitem{BourguinPeccati2014}
S.~Bourguin and G.~Peccati.
\newblock Portmanteau inequalities on the {P}oisson space: mixed regimes and
  multidimensional clustering.
\newblock {\em Electron. J. Probab.}, 19:no. 66, 42, 2014.

\bibitem{chafai8}
D.~Chafa{\"{\i}}.
\newblock Entropies, convexity, and functional inequalities: on
  {{\(\Phi\)}}-entropies and {{\(\Phi\)}}-{Sobolev} inequalities.
\newblock {\em J. Math. Kyoto Univ.}, 44(2):325--363, 2004.

\bibitem{CongXia2024}
T.~Cong and A.~Xia.
\newblock Normal approximation in total variation for statistics in geometric
  probability.
\newblock {\em Adv. in Appl. Probab.}, 56(1):106--155, 2024.

\bibitem{DecHomology}
L.~Decreusefond, E.~Ferraz, H.~Randriambololona, and A.~Vergne.
\newblock Simplicial homology of random configurations.
\newblock {\em Adv. in Appl. Probab.}, 46(2):325--347, 2014.

\bibitem{DTGS23}
P.~Di~Tella, C.~Geiss, and A.~Steinicke.
\newblock Product formulas for multiple stochastic integrals associated with
  {L}{\'e}vy processes.
\newblock {\em Collect. Math.}, pages 1--33, 2024.

\bibitem{DoblerPeccati2017}
C.~D\"obler and G.~Peccati.
\newblock Quantitative de {J}ong theorems in any dimension.
\newblock {\em Electron. J. Probab.}, 22:Paper No. 2, 35, 2017.

\bibitem{DoblerPeccatiAOP}
C.~D{\"o}bler and G.~Peccati.
\newblock The fourth moment theorem on the {Poisson} space.
\newblock {\em Ann. Probab.}, 46(4):1878--1916, 2018.

\bibitem{DoblerPeccati18}
C.~D{\"o}bler and G.~Peccati.
\newblock Fourth moment theorems on the {Poisson} space: analytic statements
  via product formulae.
\newblock {\em Electron. Commun. Probab.}, 2018.

\bibitem{DP18a}
C.~D{\"o}bler and G.~Peccati.
\newblock Quantitative clts for symmetric {U}-statistics using contractions.
\newblock {\em Electron. J. Probab.}, 2019.

\bibitem{DVZ18}
C.~D\"{o}bler, A.~Vidotto, and G.~Zheng.
\newblock Fourth moment theorems on the {P}oisson space in any dimension.
\newblock {\em Electron. J. Probab.}, 23:1--27, 2018.

\bibitem{DurMaPec}
C.~Durastanti, D.~Marinucci, and G.~Peccati.
\newblock Normal approximations for wavelet coefficients on spherical {P}oisson
  fields.
\newblock {\em J. Math. Anal. Appl.}, 409(1):212--227, 2014.

\bibitem{EichelsbacherThale2014}
P.~Eichelsbacher and C.~Th\"ale.
\newblock New {B}erry-{E}sseen bounds for non-linear functionals of {P}oisson
  random measures.
\newblock {\em Electron. J. Probab.}, 19:no. 102, 25, 2014.

\bibitem{FisslerThale2016}
T.~Fissler and C.~Th\"ale.
\newblock A four moments theorem for gamma limits on a {P}oisson chaos.
\newblock {\em ALEA Lat. Am. J. Probab. Math. Stat.}, 13(1):163--192, 2016.

\bibitem{GieLast}
F.~Gieringer and G.~Last.
\newblock Concentration inequalities for measures of a {Boolean} model.
\newblock {\em ALEA, Lat. Am. J. Probab. Math. Stat.}, 15(1):151--166, 2018.

\bibitem{Grygierek2020}
J.~Grygierek.
\newblock Poisson and {G}aussian fluctuations for the components of the {$\bold
  {f}$}-vector of high-dimensional random simplicial complexes.
\newblock {\em ALEA Lat. Am. J. Probab. Math. Stat.}, 17(2):675--709, 2020.

\bibitem{GuSaTh}
A.~Gusakova, H.~Sambale, and Ch. Th{\"a}le.
\newblock Concentration on {Poisson} spaces via modified {{\(\Phi\)}}-{Sobolev}
  inequalities.
\newblock {\em Stochastic Processes Appl.}, 140:216--235, 2021.

\bibitem{Herry2020}
R.~Herry.
\newblock Stable limit theorems on the {P}oisson space.
\newblock {\em Electron. J. Probab.}, 25:Paper No. 149, 30, 2020.

\bibitem{HugLastSchulte2016}
D.~Hug, G.~Last, and M.~Schulte.
\newblock Second-order properties and central limit theorems for geometric
  functionals of {B}oolean models.
\newblock {\em Ann. Appl. Probab.}, 26(1):73--135, 2016.

\bibitem{HugThaleWeil2015}
D.~Hug, C.~Th\"ale, and W.~Weil.
\newblock Intersection and proximity of processes of flats.
\newblock {\em J. Math. Anal. Appl.}, 426(1):1--42, 2015.

\bibitem{Kab76}
Y.~M. Kabanov.
\newblock On extended stochastic intervals.
\newblock {\em Theory Probab. Appl.}, 1976.

\bibitem{KabluchkoRosenThale2024}
Z.~Kabluchko, D.~Rosen, and C.~Th\"ale.
\newblock A quantitative central limit theorem for {P}oisson horospheres in
  high dimensions.
\newblock {\em Electron. Commun. Probab.}, 29:Paper No. 47, 11, 2024.

\bibitem{LRPEJP}
R.~Lachi\`eze-Rey and G.~Peccati.
\newblock Fine {G}aussian fluctuations on the {P}oisson space, {I}:
  contractions, cumulants and geometric random graphs.
\newblock {\em Electron. J. Probab.}, 18:no. 32, 32, 2013.

\bibitem{LRPSPA}
R.~Lachi\`eze-Rey and G.~Peccati.
\newblock Fine {G}aussian fluctuations on the {P}oisson space {II}: rescaled
  kernels, marked processes and geometric {$U$}-statistics.
\newblock {\em Stoch. Process. Appl.}, 123(12):4186--4218, 2013.

\bibitem{LRRsv}
R.~Lachi\`eze-Rey and M.~Reitzner.
\newblock {$U$}-statistics in stochastic geometry.
\newblock In {\em Stochastic analysis for {P}oisson point processes}, volume~7
  of {\em Bocconi Springer Ser.}, pages 229--253. Bocconi Univ. Press, [place
  of publication not identified], 2016.

\bibitem{Lastsv}
G.~Last.
\newblock Stochastic analysis for {P}oisson processes.
\newblock In G.~Peccati and M.~Reitzner, editors, {\em Stochastic analysis for
  Poisson point processes}, Mathematics, Statistics, Finance and Economics,
  chapter~1, pages 1--36. Bocconi University Press and Springer, 2016.

\bibitem{LPS}
G.~Last, G.~Peccati, and M.~Schulte.
\newblock Normal approximation on {P}oisson spaces: {M}ehler's formula, second
  order {P}oincar\'e inequalities and stabilization.
\newblock {\em Probab.Theory Relat. Fields}, 2016.

\bibitem{LPY}
G.~Last, G.~Peccati, and D.~Yogeshwaran.
\newblock Phase transitions and noise sensitivity on the {P}oisson space via
  stopping sets and decision trees.
\newblock {\em Random Struct. Algorithms}, 63(2):457--511, 2023.

\bibitem{LPfock}
G.~Last and M.~Penrose.
\newblock Poisson process {F}ock space representation, chaos expansion and
  covariance inequalities.
\newblock {\em Probab. Theory Relat. Fields (}, 2011.

\bibitem{LPbook}
G.~Last and M.~Penrose.
\newblock {\em Lectures on the Poisson Process}.
\newblock IMS Textbooks. Cambridge University Press, Cambridge, 2017.

\bibitem{LaPen11}
G.~Last and M.~D. Penrose.
\newblock Poisson process {F}ock space representation, chaos expansion and
  covariance inequalities.
\newblock {\em Probab.Theory Relat. Fields}, 150(3-4):663--690, 2011.

\bibitem{LPST}
G.~Last, M.~D. Penrose, M.~Schulte, and C.~Th{\"a}le.
\newblock Moments and central limit theorems for some multivariate {Poisson}
  functionals.
\newblock {\em Adv. Appl. Probab.}, 46(2):348--364, 2014.

\bibitem{LeMinh2023}
T.~Le~Minh.
\newblock {$U$}-statistics on bipartite exchangeable networks.
\newblock {\em ESAIM Probab. Stat.}, 27:576--620, 2023.

\bibitem{NouPecbook}
I.~Nourdin and G.~Peccati.
\newblock {\em Normal approximations with {M}alliavin calculus}, volume 192 of
  {\em Cambridge Tracts in Mathematics}.
\newblock Cambridge University Press, Cambridge, 2012.
\newblock From Stein's method to universality.

\bibitem{NPYpams}
I.~Nourdin, G.~Peccati, and X.~Yang.
\newblock Restricted hypercontractivity on the {P}oisson space.
\newblock {\em Proc. Amer. Math. Soc.}, 148(8):3617--3632, 2020.

\bibitem{PRbook}
G.~Peccati and M.~Reitzner, editors.
\newblock {\em Stochastic analysis for {P}oisson point processes}, volume~7 of
  {\em Bocconi \& Springer Series}.
\newblock Bocconi University Press; Springer, 2016.
\newblock Malliavin calculus, Wiener-It\^o{} chaos expansions and stochastic
  geometry.

\bibitem{PSTU}
G.~Peccati, J.~L. Sol{\'e}, M.~S. Taqqu, and F.~Utzet.
\newblock Stein's method and normal approximation of {Poisson} functionals.
\newblock {\em Ann. Probab.}, 38(2):443--478, 2010.

\bibitem{PTbook}
G.~Peccati and M.~S. Taqqu.
\newblock {\em Wiener chaos: moments, cumulants and diagrams}.
\newblock Bocconi \& Springer Series. Springer, Milan; Bocconi University
  Press, Milan, 2011.

\bibitem{PeccatiThaeleALEA}
G.~Peccati and C.~Th\"ale.
\newblock Gamma limits and {$U$}-statistics on the {P}oisson space.
\newblock {\em ALEA Lat. Am. J. Probab. Math. Stat.}, 10(1):525--560, 2013.

\bibitem{PianoforteTurin2023}
F.~Pianoforte and R.~Turin.
\newblock Multivariate {P}oisson and {P}oisson process approximations with
  applications to {B}ernoulli sums and {$U$}-statistics.
\newblock {\em J. Appl. Probab.}, 60(1):223--240, 2023.

\bibitem{privaultbook}
N.~Privault.
\newblock {\em Stochastic analysis in discrete and continuous settings with
  normal martingales}.
\newblock Lecture Notes in Mathematics. Springer-Verlag, Berlin, 2009.

\bibitem{SchulteThaele12}
M.~Reitzner and M.~Schulte.
\newblock Central limit theorems for {$U$}-statistics of {P}oisson point
  processes.
\newblock {\em Ann. Probab.}, 41(6):3879--3909, 2013.

\bibitem{ReitznerSchulteThale2017}
M.~Reitzner, M.~Schulte, and C.~Th\"ale.
\newblock Limit theory for the {G}ilbert graph.
\newblock {\em Adv. in Appl. Math.}, 88:26--61, 2017.

\bibitem{RW97}
G.-C. Rota and T.~C. Wallstrom.
\newblock Stochastic integrals: a combinatorial approach.
\newblock {\em Ann. Probab.}, 1997.

\bibitem{STT24}
H.~Sambale, C.~Thäle, and T.~Trauthwein.
\newblock Central limit theorems for the nearest neighbour embracing graph in
  {E}uclidean and hyperbolic space.
\newblock {\em Stoch. Process. Appl.}, 188:104671, 2025.

\bibitem{Schulte12}
M.~Schulte.
\newblock A central limit theorem for the {P}oisson-{V}oronoi approximation.
\newblock {\em Adv. in Appl. Math.}, 49(3-5):285--306, 2012.

\bibitem{Schulte2016}
M.~Schulte.
\newblock Normal approximation of {P}oisson functionals in {K}olmogorov
  distance.
\newblock {\em J. Theoret. Probab.}, 29(1):96--117, 2016.

\bibitem{ST24}
M.~Schulte and C.~Th{\"a}le.
\newblock Moderate deviations on {P}oisson chaos.
\newblock {\em Electron. J. Probab.}, 29:1--27, 2024.

\bibitem{SchulteThale2024}
M.~Schulte and C.~Th\"ale.
\newblock Moderate deviations on {P}oisson chaos.
\newblock {\em Electron. J. Probab.}, 29:Paper No. 146, 27, 2024.

\bibitem{surgailis84}
D.~Surgailis.
\newblock On multiple {Poisson} stochastic integrals and associated {Markov}
  semigroups.
\newblock {\em Probab. Math. Stat.}, 3(2):217--239, 1984.

\bibitem{Thomas2023}
A.~M. Thomas.
\newblock Central limit theorems and asymptotic independence for local
  {$U$}-statistics on diverging halfspaces.
\newblock {\em Bernoulli}, 29(4):3280--3306, 2023.

\bibitem{multitara}
T.~Trauthwein.
\newblock Multivariate second-order $p$-{P}oincar\'e inequalities.
\newblock Preprint, {arXiv}:2409.02843, 2024.

\bibitem{trauthwein}
T.~Trauthwein.
\newblock Quantitative {CLT}s on the {P}oisson space via {S}korohod estimates
  and $p$-{P}oincaré inequalities.
\newblock {\em Ann. Appl. Probab.}, 2025.

\bibitem{wu2000}
L.~Wu.
\newblock A new modified logarithmic {Sobolev} inequality for {Poisson} point
  processes and several applications.
\newblock {\em Probab. Theory Relat. Fields}, 118(3):427--438, 2000.

\end{thebibliography}
\bibliographystyle{plain}

\end{document}